\newtheorem{theorem}{Theorem}[section]
\newtheorem{lemma}{Lemma}[section]
\newtheorem{example}{Example}[section]
\newtheorem{remark}{Remark}{}
\numberwithin{equation}{section}
\DeclareMathOperator*{\argmin}{arg\,min}
\newcommand{\dx}{\,{\rm d}x}
\newcommand{\dd}{\,{\rm d}}
\newcommand*{\extendadd}{
  \mathbin{
    \mathpalette\extend@add{}
  }
}
\newcommand*{\extend@add}[2]{
  \ooalign{
    $\m@th#1\leftrightarrow$%
    \vphantom{$\m@th#1\updownarrow$}
    \cr
    \hfil$\m@th#1\updownarrow$\hfil
  }
}
\begin{document}

\title{Int-Deep: A Deep Learning Initialized Iterative Method for Nonlinear Problems}

\author{Jianguo Huang and Haoqin Wang
\vspace{0.1in}\\
School of Mathematical Sciences, and MOE-LSC,\\
 Shanghai Jiao Tong University, Shanghai 200240, China.
  \vspace{0.1in}\\
Haizhao Yang
  \vspace{0.1in}\\
  Department of Mathematics, Purdue University\footnote{Current institute.}, West Lafayette, IN 47907,  USA\\
  Department of Mathematics, National University of Singapore\footnote{Part of the work was done in Singapore.}, Singapore
}

\maketitle

\begin{abstract}
This paper proposes a deep-learning-initialized iterative method (Int-Deep) for low-dimensional nonlinear partial differential equations (PDEs). The corresponding framework consists of two phases. In the first phase, an expectation minimization problem formulated from a given nonlinear PDE is approximately resolved with mesh-free deep neural networks to parametrize the solution space. In the second phase, a solution ansatz of the finite element method to solve the given PDE is obtained from the approximate solution in the first phase, and the ansatz can serve as a good initial guess such that Newton's method or other iterative methods for solving the nonlinear PDE are able to converge to the ground truth solution with high-accuracy quickly. Systematic theoretical analysis is provided to justify the Int-Deep framework for several classes of problems. Numerical results show that the Int-Deep outperforms existing purely deep learning-based methods or traditional iterative methods (e.g., Newton's method and the Picard iteration method).
\end{abstract}

{\bf Keywords.} Deep learning, nonlinear problems, partial differential equations, eigenvalue problems, iterative methods, fast and accurate.

{\bf AMS subject classifications: 68U99, 	65N30 and 	65N25.}

\section{Introduction}\label{sec: introduction}

This paper is concerned with the efficient numerical method for solving nonlinear partial differential equations (PDEs) including a class of eigenvalue problems as special cases, which is a ubiquitous and important topic in science and engineering \cite{PhysRev1,nature,PhysRev3,PhysRev2,BASDEVANT198623,Miura,HYMAN1986113,10.2307/2004575}. As far as we know, there have developed many traditional and typical numerical methods in this area, e.g., the finite difference method, the spectral method, and the finite element method \cite{QuarteroniValli1994}. The first two methods are generally used for solving problems over regular domains while the latter one is particularly suitable for solving problems over irregular domains \cite{Bartels,ZienkiewiczTaylorZhu2005}. To achieve the numerical solution with the desired accuracy, one is often required to numerically solve the discrete problem formulated as a large-scale nonlinear system of nonlinear equations, which is time-consuming. In this case, one of the most critical issues is to choose the feasible initial guess so that the numerical solver (e.g. Newton's method) is convergent. On the other hand, for reducing the computational cost,  two grid methods are thereby devised in \cite{Xu1994,Xu1996}, which only require to solve a small-sized nonlinear system arising from the finite element discretization based on a coarse triangulation. However, the difficulty is that we even can not ensure if the nonlinear system has a solution in theory if the mesh size of the coarse triangulation is large enough.

Recently, science and engineering have undergone a revolution driven by the success of deep learning techniques that originated in computer science. This revolution also includes broad applications of deep learning in computational and applied mathematics. Many new branches in scientific computing have emerged based on deep learning in the past five years including new methods for solving nonlinear PDEs. There are mainly two kinds of deep learning approaches for solving non-linear PDEs:  mesh-based \cite{Mingui2003,Tang2017,Yuehaw2017,Yuehaw20182,Yuwei2018,Fan2019,Fan2019BCRNetAN} and mesh-free \cite{BERG2018,Carleo2017,Han8505,Khoo2018,RUDD2015,SIRIGNANO2018,RaissiPerdikarisKarniadakis2019}. In the mesh-based methods, deep neural networks (DNNs) are constructed to approximate the solution operator of a PDE, e.g., seeking a DNN that approximates the map mapping the coefficients (or initial/boundary conditions) of a PDE to the corresponding solution. After construction, the DNN can be applied to solve a specific class of PDEs efficiently. In the mesh-free methods, which probably date back to 1990's (e.g., see \cite{doi:10.1002/cnm.1640100303,712178}), DNNs are applied as the parametrization of the solution space of a PDE; then the solution of the PDE is identified via seeking a DNN that fits the constraints of the PDE in the least-squares sense or minimizes a variational problem formulated from PDEs. The key to the success of these approaches is the universal approximation capacity of DNNs \cite{kurkova1992,barron1993,yarotsky2017,yarotsky2018,ShenYangZhang20192} even without the curse of dimensionality for a large class of functions \cite{barron1993,montanelli2019a,montanelli2019b,Yang20192,LuShenYangZhang2020}.

Though the deep learning approach has made it possible to solve high-dimensional problems, which is a significant breakthrough in scientific computing,  to the best of our knowledge, the advantage of deep learning approaches over traditional methods in the low-dimensional region is still not clear yet. The main concern is the computational efficiency of these frameworks: the number of iterations in deep learning methods is usually large or the accuracy is very limited (e.g., typically $10^{-2}$ to $10^{-4}$ relative error). In order to overcome this difficulty, one is tempted to set up a more efficient neural network architecture (e.g., incorporating physical information in the structure designing \cite{doi:10.1063/1.4961454,doi:10.1063/1.5054310,Zhang:2018:ESP:3327345.3327356,Yuwei2018,GuYangZhou2020}, designing a more advanced learning algorithm for deep learning training \cite{MEADE199419,haomin}, or using a solution ansatz according to prior knowledge \cite{712178,SHEKARIBEIDOKHTI2009898,5061501}). However, the overall performance of these frameworks for nonlinear problems without any prior knowledge may still not be very convincingly efficient.

This paper proposes the Int-Deep framework from a new point of view for designing highly efficient solvers of low-dimensional nonlinear PDEs with a finite element accuracy leveraging both the advantages of traditional algorithms and deep learning approaches.  The Int-Deep framework consists of two phases { {as shown in Figure \ref{fig:flow}}}. In the first phase, an approximate solution to the given nonlinear PDE is obtained via deep learning approaches using DNNs of size $O(1)$ and $O(100)$ iterations, where $O(\cdot)$ means that the prefactor is independent of the final target accuracy in the Int-Deep framework, i.e., the accuracy of finite element methods. In particular, based on variational principles, we propose new methods to formulate the problem of solving nonlinear PDEs into an unconstrained minimization problem of an expectation over a function space parametrized via DNNs, which can be solved efficiently via batch stochastic gradient descent (SGD) methods due to the special form of expectation. Unlike previous methods in which the form of expectation is only derived for nonlinear PDEs related to variational equations, our proposed method can also handle those related to variational inequalities, providing a unified variational framework for a wider range of nonlinear problems.

\begin{figure}[H]
		\centering
		\includegraphics[width = 13 cm]{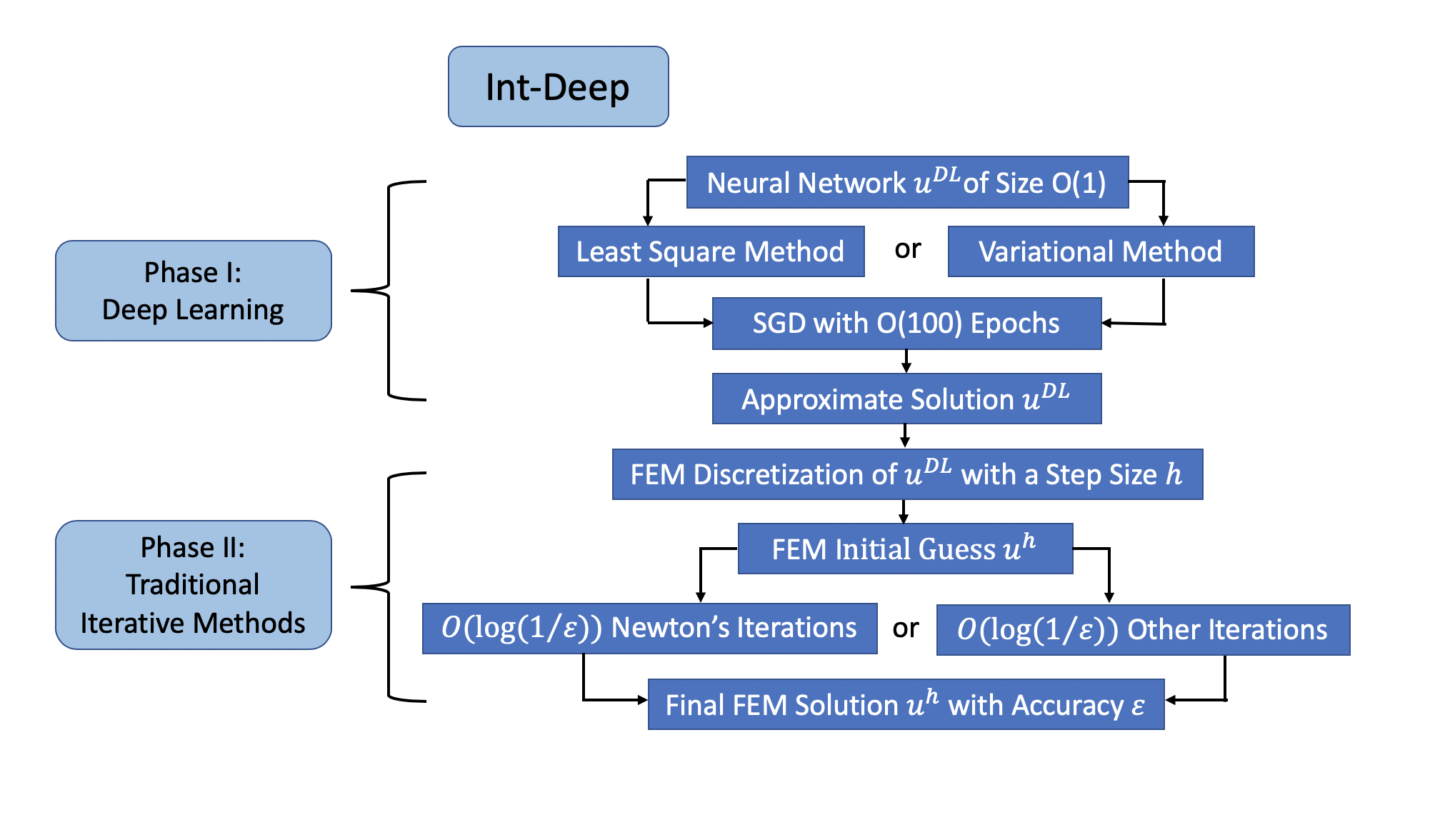}
		\caption{Computational flow of Int-Deep: Deep learning solvers in Phase I only require $O(1)$ computational cost since the network size is $O(1)$ and the number of epochs of the stochastic gradient descent (SGD) is $O(100)$. Empirically, traditional iterative methods converges to a solution with $O(\epsilon)$ accuracy in $O(\log(\frac{1}{\epsilon}))$ iterations in Phase II.}
		\label{fig:flow}
\end{figure}

In the second phase, the approximate solution provided by deep learning approaches can serve as a good initial guess such that traditional iterative methods (e.g., Newton's method for solving nonlinear PDEs and the shifted power method for eigenvalue problems) converge quickly to the ground truth solution with high-accuracy. The hybrid algorithm substantially reduces the learning cost of deep learning approach while keeping the quality of initial guesses for traditional iterative methods; good initial guesses enable traditional iterative methods to converge in $O(\log(\frac{1}{\epsilon}))$ iterations to the $\epsilon$ precision of finite element methods. In each iteration of the traditional iterative method, the nonlinear problem has been linearized and hence traditional fast solvers for linear systems can be applied depending on the underlying structure of the linear system. Therefore, as we shall see in the numerical section, the Int-Deep framework outperforms existing purely deep learning-based methods or traditional iterative methods, e.g., Newton's method and Picard iteration. Furthermore, systematic theoretical analysis is provided to characterize the conditions under which the Int-Deep framework converges, serving as a trial to change current deep learning research from trial-and-error to a suite of methods informed by a principled design methodology.

This paper will be organized as follows. In Section \ref{sec:dnn}, we briefly review the definitions of DNNs. In Section \ref{sec:I}, we introduce the expectation minimization framework for deep learning-based PDE solvers in the first phase of the Int-Deep framework. In Section \ref{sec:II}, as the second phase of Int-Deep, traditional iterative methods armed with good initial guesses provided by deep learning approaches will be introduced together with its theoretical convergence analysis, for clarity of presentation the proofs of which are left in Appendix. In Section \ref{sec: experiments}, a set of numerical examples will be provided to demonstrate the efficiency of the proposed framework and to justify our theoretical analysis. Finally in Section \ref{sec:con}, we summarize our paper with a short discussion.

\section{Deep Neural Networks (DNNs)}
\label{sec:dnn}

Mathematically, DNNs are a form of highly non-linear function parametrization via function compositions using simple non-linear functions \cite{IanYoshuaAaron2016}.  The validity of such an approximation method can be ensured by the universal approximation theorems of DNNs in \cite{kurkova1992,barron1993,yarotsky2017,yarotsky2018,ShenYangZhang2019,ShenYangZhang20192}. Let us introduce two basic neural network structures commonly used for solving PDEs below.

The first one is the so-called fully connected feed-forward neural network (FNN), which is a function in the form of a composition of $L$ simple nonlinear functions as follows:
\[
	\phi(\bm{x};\bm{\theta}):=\bm{a}^T \bm{h}_L \circ \bm{h}_{L-1} \circ \cdots \circ \bm{h}_{1}(\bm{x}),
\]
 where $\bm{h}_{\ell}(\bm{x})=\sigma\left(\bm{W}_\ell \bm{x} + \bm{b}_\ell \right)$ with $\bm{W}_\ell \in \mathbb{R}^{N_{\ell}\times N_{\ell-1}}$, $\bm{b}_\ell \in \mathbb{R}^{N_\ell}$ for $\ell=1,\dots,L$, $\bm{a}\in \mathbb{R}^{N_L}$, $\sigma$ is a non-linear activation function, e.g., a rectified linear unit (ReLU) $\sigma(x)=\max\{x,0\}$ or hyperbolic tangent function $\tanh(x)$. Each $\bm{h}_\ell$ is referred as a hidden layer,  $N_\ell$ is the width of the $\ell$-th layer, and $L$ is called the depth of the FNN. In the above formulation, $\bm{\theta}:=\{\bm{a},\,\bm{W}_\ell,\,\bm{b}_\ell:1\leq \ell\leq L\}$ denotes the set of all parameters in $\phi$, which uniquely determines the underlying neural network.


{  The second one is the so-called residual neural network (ResNet) introduced by He, Zhang, Ren and Sun in \cite{HeZhangRenSun2016}. We apply its variant defined recursively as follows:}
 \begin{eqnarray*}
 \bm{h}_0&=&\bm{V}\bm{x},\\
 \bm{g}_\ell&=&\sigma(\bm{W}_\ell\bm{h}_{\ell-1}+\bm{b}_{\ell}),
\qquad\ell=1,2,\dots,L,\\
 \bm{h}_\ell&=&\bm{\bar{U}}_\ell \bm{h}_{\ell-2}+\bm{U}_\ell\bm{g}_\ell, \quad\ell=1,2,\dots,L,\\
\phi(\bm{x};
 \bm{\theta})&=&\bm{a}^T\bm{h}_L,
 \end{eqnarray*}
 where $\bm{V}\in \mathbb{R}^{N_0\times d}$, $\bm{W}_\ell\in \mathbb{R}^{N_{\ell}\times N_{0}}$, $\bm{\bar{U}}_\ell\in \mathbb{R}^{N_{0}\times N_{0}}$, $\bm{U}_\ell\in \mathbb{R}^{N_{0}\times N_{\ell}}$, $\bm{b}_\ell\in \mathbb{R}^{N_\ell}$ for $\ell=1,\dots,L$, $\bm{a}\in \mathbb{R}^{N_0}$, $\bm{h}_{-1}=\bm{0}$. Throughout this paper, we consider $N_0=N_\ell=N$ and $\bm{U}_\ell$ is set as the identity matrix in the numerical implementation of ResNets for the purpose of simplicity. Furthermore, as used in \cite{EYU2018}, we set $\bm{\bar{U}}_\ell $ as the identify matrix when $\ell$ is even and set $\bm{\bar{U}}_\ell =\bm{0}$ when $\ell$ is odd.

\section{Phase I of Int-Deep: Variational Formulas for Deep Learning}\label{sec:I}

In this section, we will present existing and our new analysis for reformulating nonlinear PDEs including eigenvalue problems to the minimization of expectation that can be solved by SGD. The analysis works for PDEs related to variational equations and variational inequalities, the latter of which is of special interest since there might be no available literature discussing this case to the best of our knowledge. Hence, our analysis could serve as a good reference for a wide range of problems.

{  Throughout this paper, we will use standard symbols and notations for Sobolev spaces and their norms/seminorms; we refer the reader to the reference \cite{Adams1975} for details. Moreover, the standard $L^2(D)$-inner product for a bounded domain $D$ is denoted by $(\cdot, \cdot)_{D}$. If the domain $D$ is the solution domain $\Omega$, we will drop out the dependence of $\Omega$ in all Sobolev norms/seminorms and $L^2(\Omega)$-inner product when there is no confusion caused.}

\subsection{PDE Solvers Based on DNNs}\label{sec:sol}

The general idea of deep learning-based PDE solvers is to treat DNNs as an efficient parametrization of the solution space of a PDE and the solution of the PDE is identified via seeking a DNN that fits the constraints of the PDE in the least-squares sense or minimizes the variational minimization problem related to the PDE. Let us use the following example to illustrate the main idea:
\begin{equation}\label{eqn:PDE}
\left\{
\begin{aligned}
	\mathcal{D}(u)&=f\quad \text{in }\Omega, \\
       \mathcal{B}(u)&=g\quad \text{on }\partial\Omega,
\end{aligned}
\right.
\end{equation}
where $\mathcal{D}$ is a differential operator and $\mathcal{B}$ is a boundary operator.

In the least squares type methods (LSM), a DNN $\phi(\bm{x};\bm{\theta}^*)$ is constructed to approximate the solution $u(\bm{x})$ for $\bm{x}\in\Omega$ via minimizing the square loss
\begin{equation}\label{eqn:PDEl2}
\bm{\theta}^*=\argmin_{\bm{\theta}} \mathcal{L}(\bm{\theta}):=
\mathbb{E}_{\bm{x}\in\Omega} \left[\left|\mathcal{D}\phi(\bm{x};\bm{\theta})-f(\bm{x})\right|^2\right] + \gamma \mathbb{E}_{\bm{x}\in \partial\Omega} \left[\left|\mathcal{B}\phi(\bm{x};\bm{\theta})-g(\bm{x})\right|^2\right],
\end{equation}
with a positive parameter $\gamma$. 

In the variational type methods (VM), \eqref{eqn:PDE} is solved via a variational minimization
\begin{equation}\label{eq: nonlinear}
	u^*=\argmin_{u\in H} J(u),
\end{equation}
where the Hilbert space $H$ is an admissible space, and $J(u)$ is a nonlinear functional over $H$. Then, the solution space $H$ is parametrized via DNNs, i.e., $H\approx \{\phi(\bm{x}; \bm{\theta})\}_{\bm{\theta}}$, where $\phi$ is a DNN with a fixed depth $L$ and width $N$. After parametrization, \eqref{eq: nonlinear} is approximated by the following problem:
\begin{equation}\label{eq: optim theta}
	\bm{\theta}^*=\argmin_{\bm{\theta}} J(\phi(\bm{x}; \bm{\theta})).
\end{equation}
In general, $J(\phi(\bm{x}; \bm{\theta}))$ can be formulated as the sum of several integrals over several sets $\{\Omega_i\}_{i=1}^p$, each of which corresponds to one equation in \eqref{eqn:PDE}:
\begin{equation}\label{eq: integral}
	J(\phi(\bm{x}; \bm{\theta}))
	= \sum_{i=1}^p \int_{\Omega_i} F_i(\bm{x}; \bm{\theta})\dx
	= \sum_{i=1}^p |\Omega_i| \mathbb{E}_{\bm{\xi}_i}\Big[F_i(\bm{\xi}_i; \bm{\theta})\Big],
\end{equation}
where $F_i(\cdot; \bm{\theta)}$ is a function related to a variational constraint on $\phi(\bm{x}; \bm{\theta})$, $\bm{\xi}_i$ is a random vector produced by the uniform distribution over $\Omega_i$, and $|\Omega_i|$ denotes the measure of $\Omega_i$. In addition, $\bm{\xi}_i$ ($1\leq i \leq p$) are mutually independent. Based on \eqref{eq: integral}, \eqref{eq: optim theta} can be expressed as
\begin{equation}\label{eq: solvable optim theta}
	\bm{\theta}^{*} =\argmin_{\bm{\theta}} \sum_{i=1}^p |\Omega_i| \mathbb{E}_{\bm{\xi}_i} \Big[F_{i}(\bm{\xi}_i; \bm{\theta})\Big].
\end{equation}

Both the LSM in \eqref{eqn:PDEl2} and VM in \eqref{eq: optim theta} can be reformulated to the expectation minimization problem in \eqref{eq: solvable optim theta}, which can be solved by the stochastic gradient descent (SGD) method or its variants (e.g., Adam \cite{KB2014}). In this paper, we refer to \eqref{eq: solvable optim theta} as the expectation minimization framework for PDE solvers based on deep learning.

Although the convergence of SGD for minimizing the expectation in \eqref{eq: solvable optim theta} is still an active research topic, empirical success shows that SGD can provide a good approximate local minimizer of \eqref{eqn:PDEl2} and \eqref{eq: solvable optim theta}. This completes the algorithm of using deep learning to solve nonlinear PDEs with equality constraints.

We would like to emphasize that when the PDE in \eqref{eqn:PDE} is nonlinear, its solutions might be the saddle points of \eqref{eq: integral} making it very challenging to identify its solutions via minimizing \eqref{eq: integral}. Hence, we will use the LSM in \eqref{eqn:PDEl2} for PDEs associated with variational equations. It deserves to point out that \eqref{eqn:PDEl2} is also regarded as a variational formulation, referred to as the least-squares variational principle in \cite{BochevGunzburger2009} in contrast with the usual Ritz variational principle.

\subsection{Minimization Problems for Variational Inequalities}\label{sub:VI}

Variational inequalities are a class of important nonlinear problems, frequently encountered in various industrial and engineering applications \cite{DuvautLions1976,Glowinski1984}. Unlike previous methods in which the form of expectation is only derived for nonlinear PDEs with variational equations, we propose an expectation minimization framework also suitable for variational inequalities.

The abstract framework of an elliptic variational inequality of the second kind can be described as follows (cf. \cite{Glowinski1984}). Find $u\in H $ such that
	\begin{equation}
	\label{VI}
	a(u,v-u)+j(v)-j(u)\geq \langle f,v-u\rangle,\quad v \in H,
	\end{equation}
where $H$ is a Hilbert space equipped with the norm $\|\cdot\|_H$, and $\langle\cdot,\cdot\rangle$  stands for the duality pairing between $H'$ and $H$, with $H'$ being the dual space of $H$; $a(\cdot,\cdot)$ is a continuous, coercive and
symmetric bilinear form over $H$; $j(\cdot): H\rightarrow \mathbb{\overline{R}}=\mathbb{R}\cup \{\pm \infty\}$
is a proper, convex and lower semi-continuous functional.

As shown in \cite{Glowinski1984}, the above problem has a unique solution under the stated conditions on the problem data. Moreover, it can be reformulated as the following minimization problem:
\begin{equation}
\label{VI-minimization}	
u=\argmin_{v\in H} J(v)=\frac 1 2 a(v,v)-\langle f,v\rangle+j(v),
\end{equation}
which naturally falls into our expectation minimization framework in \eqref{eq: solvable optim theta}.

Let us discuss the simplified friction problem as an example, where the nonlinear PDE is given by
\begin{equation}
\label{eq: simplifiedfriction}
	\begin{cases}
		-\Delta u + u = f \quad & \mbox{in} \ \Omega,
		\\
		|\partial _{\bm{n}}u| \leq g, \ u \partial_{\bm{n}}u + g|u| = 0 \quad & \mbox{on} \ \Gamma_C,
		\\
		u = 0 \quad & \mbox{on} \  \Gamma_D,
	\end{cases}	
\end{equation}
where $\bm{n}$ is the unit outward normal to $\partial\Omega$; $\Gamma_C\subset \partial\Omega$ denotes the friction boundary, and $\Gamma_D = \partial \Omega \setminus \Gamma_C$; $f \in L^2(\Omega)$ and $g \in L^2(\Gamma_C)$ are two given functions.  \eqref{eq: simplifiedfriction} can be expressed as an elliptic variational inequality in the form \eqref{VI} or \eqref{VI-minimization} by choosing
\[
	H=V_D = \{ \phi \in H^1(\Omega): \phi = 0 \ \mbox{on} \ \Gamma_D \}
\]
and
\[
	a(\phi,\chi) = (\nabla \phi, \nabla \chi) + (\phi,\chi), \quad j(\phi) = (g,|\phi|)_{\Gamma_C},\quad \phi,\; \chi\in V_D.
\]

Hence, the minimization problem of the nonlinear PDE \eqref{eq: simplifiedfriction} is given by
\begin{equation}
\label{opt: simplifiedfriction}
	u = \argmin_{\phi \in V_D} J_1 (\phi),
\end{equation}
where
\begin{align*}
	J_1(\phi)
	& = \frac{1}{2}\| \phi \|_1^2 - (f, \phi) + j(\phi)
	\\
	& = |\Omega| \mathbb{E}_{\bm{\xi}_1}\Big[ \frac{1}{2} \big( |\nabla_{\bm{\xi}_1} \phi(\bm{\xi}_1;\bm{\theta})|^2 + \phi^2(\bm{\xi}_1;\bm{\theta}) \big) - f(\bm{\xi}_1)\phi(\bm{\xi}_1;\bm{\theta}) \Big]
	+ |\Gamma_C| \mathbb{E}_{\bm{\xi}_2} \Big[ g(\bm{\xi}_2)|\phi(\bm{\xi}_2; \bm{\theta})| \Big],
\end{align*}
{ 
where $\bm{\xi}_1$ and $\bm{\xi}_2$ are random vectors following the uniform distribution over $\Omega$ and $\Gamma_C$, respectively.
}


We can also use the penalty method to remove the { constraint condition} in the admissible space $V_D$, giving rise to an easier unconstrained minimization problem for deep learning. Then, the problem \eqref{opt: simplifiedfriction} is modified as
\begin{equation}\label{eq:VP2}
	u = \argmin_{\phi \in V_1} J_2 (\phi),
\end{equation}
where
\begin{align*}
	V_1 = H^1(\Omega)
	\quad
	J_2(\phi)=J_1(\phi)+\gamma \|\phi\|_{0,\Gamma_D}^2
	&= |\Omega| \mathbb{E}_{\bm{\xi}_1}\Big[ \frac{1}{2} \big( |\nabla_{\bm{\xi}_1} \phi(\bm{\xi}_1;\bm{\theta})|^2 + \phi^2(\bm{\xi}_1;\bm{\theta}) \big) - f(\bm{\xi}_1)\phi(\bm{\xi}_1;\bm{\theta}) \Big]
	\\
	& \quad + |\Gamma_C| \mathbb{E}_{\bm{\xi}_2} \Big[ g(\bm{\xi}_2)|\phi(\bm{\xi}_2; \bm{\theta})| \Big]
	+ |\Gamma_D| \mathbb{E}_{\bm{\xi}_3} \Big[ \gamma \phi^2(\bm{\xi}_3;\bm{\theta}) \Big],
\end{align*}
with $\gamma$ denoting a penalty parameter to be determined feasibly {  and $\bm{\xi}_3$ being a random vector following the uniform distribution over $\Gamma_D$}. The minimization problem in \eqref{eq:VP2} is of the form of expectation minimization in \eqref{eq: solvable optim theta}.


\subsection{Minimization Problems for Eigenvalue Problems}\label{sub:eig}
Now we discuss how to evaluate the smallest eigenvalue and its eigenfunction for a positive self-adjoint differential operator, e.g.,
\begin{equation}
\label{eq: eigenvalue Dirichlet}
	\begin{cases}
		- \nabla(p(\bm{x})\nabla u)+q(\bm{x})u = \lambda u \quad & \mbox{in} \  \Omega,
		\\
		u =0 \quad  & \mbox{on} \  \partial\Omega,
	\end{cases}
\end{equation}
where $p(\bm{x})\in C^1(\bar{\Omega})$ and there exist two positive constants $p_1\ge p_0$ such that $p_0\le p(\bm{x})\le p_1$ for all $\bm{x}\in \bar{\Omega}$, and $q(\bm{x})\in C(\bar{\Omega})$ is nonnegative over $\Omega$. Here and in what follows, $\bar{\Omega}$ denotes the closure of $\Omega$.

The solution $(\lambda,u)$ is governed by the following variational problem
\begin{equation}
\label{eigenfunction}
u = \argmin_{\phi\in V} J_3(\phi),
\end{equation}
where $V = H_0^1(\Omega)$ and
\[
	J_3(\phi) = \frac{a(\phi,\phi)}{\| \phi \|_0^2} + \gamma(\phi(\bm{x}_0) - 1 )^2;
	\quad
	a(\phi,\chi)=\int_{\Omega}[p(\bm{x})\nabla \phi\cdot \nabla\chi+q(\bm{x})\phi\ \chi] \dx,\quad \phi,\ \chi\in V;
\]
$\gamma$ is any given positive number, and $\bm{x}_0$ is any given point in the interior of $\Omega$.

If $u_*\in V$ is a solution, we have by the variational principle for eigenvalue problems  \cite{AmbrosettiMalchiodi2007} that
\begin{equation}
\label{eigenfunction1}
u_*=\argmin_{\phi\in V}\frac{a(\phi,\phi)}{\| \phi \|_0^2}.
\end{equation}
Let $\alpha$ be a constant such that $(\alpha u_*)(\bm{x}_0)=1$ and $u=\alpha u_*$. By \eqref{eigenfunction1}, $J_3(u)\le \frac{a(\phi,\phi)}{\| \phi \|_0^2}\le J_3(\phi)$ for all $\phi\in V$. The converse can be proved similarly. Furthermore, it is easy to check that
\[
  J_3(\phi)
	= \frac{a(\phi(\bm{x};\bm{\theta}),\phi(\bm{x};\bm{\theta}))}{\| \phi(\bm{x};\bm{\theta}) \|_0^2}+\gamma(\phi(\bm{x}_0;\bm{\theta})-1)^2
 {
	= \frac{\mathbb{E}_{\bm{\xi}} \Big[ p(\bm{\xi})|\nabla_{\bm{\xi}}\phi(\bm{\xi};\bm{\theta})|^2+q(\bm{\xi})\phi^2(\bm{\xi};\bm{\theta}) \Big] } {\mathbb{E}_{\bm{\eta}} \Big[ \phi^2(\bm{\eta};\bm{\theta}) \Big]}+\gamma(\phi(\bm{x}_0;\bm{\theta})-1)^2,
  }
\]
where $\bm{\xi}$ and $\bm{\eta}$ are i.i.d. random vectors following the uniform distribution over $\Omega$.

In fact, \eqref{eigenfunction} is a constrained minimization problem, that is
\begin{align*}
	& u = \argmin_{\phi\in V_1} J_3(\phi),
	\\
	\mbox{s.t.} \quad
	&u = 0   \quad  \mbox{on} \ \partial \Omega,
\end{align*}
where $V_1 = H^1(\Omega)$ as before. Here, we exploit the idea from Jens Berg and Kaj Nystr$\ddot{\text{o}}$m \cite{BERG2018} to reformulate the above problem as an unconstrained minimization one. To this end, we construct the neural network function as follows:
\[
	\phi(\bm{x};\bm{\theta}) = B(\bm{x})\psi(\bm{x};\bm{\theta}),
\]
where $B(\bm{x})$  is a known smooth function such that the boundary $\partial\Omega$ can be parametrized by $B(\bm{x})=0$, and  { $\psi(\bm{x};\bm{\theta})$} is any function in $V_1$. So \eqref{eigenfunction} becomes the expectation minimization:
\begin{equation}
\label{eq:VP3}
	u = \argmin_{\psi\in V_1} J_3(B(\bm{x})\psi(\bm{x};\bm{\theta})).
\end{equation}

Once we have obtained the eigenfunction $u(\bm{x})$, the corresponding eigenvalue is $\lambda=a(u,u)/\|u\|_0^2$. Note that the variational formulation developed here is different from the one devised by E and Yu \cite{EYU2018} and the expectation form in our formulation makes it easier to implement SGD.

The proposed method evaluates the smallest eigenvalue and its eigenfunction not only  for linear eigenvalue problems, but also for nonlinear eigenvalue problems. To generalize this idea for nonlinear eigenvalue problems, let us consider the nonlinear Schr$\ddot{\text{o}}$dinger equation called the Gross-Pitaevskii (GP) equation as an example:
\begin{equation}\label{eq: GP Dirichlet}
	\begin{cases}
		-\Delta u + V(\bm{x})u + \beta u^3= \lambda u &\quad \mbox{in} \ \Omega,
		\\
		u =0 &\quad \mbox{on} \ \partial\Omega, \\
		\| u \|_0 =1,
	\end{cases}
\end{equation}
where the real-valued potential function $V(\bm{x}) \in L^p(\Omega)$ for some real number $p>1$, and $\beta$ is a positive number.

According to~\cite{Cances2010}, the ground state non-negative solution $(\lambda, u)$ of \eqref{eq: GP Dirichlet} is unique and governed by the following variational problem
\begin{equation}\label{GP variational 1}
	u = \argmin \{ J_4(\phi): \phi \in H_0^1(\Omega),  \| \phi\|_0 =1 \},
\end{equation}
where
\[
	J_4(\phi) = (\nabla \phi, \nabla \phi)+ (V(\bm{x})\phi, \phi) + \frac{\beta}{2}(\phi^3,\phi).
\]

To relax the constraint $\| \phi \|_0 = 1$, we consider the minimization problem for $\phi/\|\phi\|_0$ with $\phi\in V$, and reformulate the variational problem \eqref{GP variational 1} in the form
\begin{equation} \label{GP variational 2}
	u = \argmin_{\phi \in H_0^1(\Omega)} J_5(\phi),
\end{equation}
where
\[
	J_5(\phi) = \frac{(\nabla \phi, \nabla \phi)+ (V(\bm{x})\phi, \phi)}{\| \phi \|_0^2} + \frac{\beta}{2}\frac{(\phi^3,\phi)}{\| \phi \|_0^4} + \gamma(\phi(\bm{x}_0) - 1 )^2;
\]
$\bm{x}_0$ is any given point in the interior of $\Omega$.
Moreover, the functional $J_5(\phi)$ can be written as the expectation framework as follow.
{ 
\begin{align*}
  J_5(\phi(\bm{x};\bm{\theta})) &=
	\frac{(\nabla_{\bm{x}} \phi(\bm{x};\bm{\theta}), \nabla_{\bm{x}} \phi(\bm{x};\bm{\theta}))+(V(\bm{x})\phi(\bm{x};\bm{\theta}), \phi(\bm{x};\bm{\theta}))}{\| \phi(\bm{x};\bm{\theta}) \|_0^2} + \frac{\beta}{2}\frac{(\phi^3(\bm{x};\bm{\theta}),\phi(\bm{x};\bm{\theta}))}{\| \phi(\bm{x};\bm{\theta}) \|_0^4} \\
	&\quad + \gamma(\phi(\bm{x}_0;\bm{\theta}) - 1 )^2 \\
	&=\frac{\mathbb{E_{\bm{\xi}}}\Big[ | \nabla_{\bm{\xi}} \phi(\bm{\xi};\bm{\theta})|^2 + V(\bm{\xi})\phi^2(\bm{\xi};\bm{\theta}) \Big]}{\mathbb{E_{\bm{\eta}}} \Big[ \phi^2(\bm{\eta};\bm{\theta}) \Big]} + \frac{\beta}{2|\Omega|}\frac{\mathbb{E_{\bm{\xi}}}\Big[\phi^4(\bm{\xi};\bm{\theta}) \Big]}{\mathbb{E_{\bm{\eta};\bm{\zeta}}} \Big[ \phi^2(\bm{\eta};\bm{\theta}) \phi^2(\bm{\zeta};\bm{\theta}) \Big]} + \gamma(\phi(\bm{x}_0;\bm{\theta}) - 1 )^2,
\end{align*}
}
where $\bm{\xi}, \bm{\eta}, \bm{\zeta}$ are i.i.d. random vectors produced by the uniform distribution over $\Omega$.

Similarly, we can also eliminate the boundary constraint following the ideas in linear eigenvalue problems. Then the variational problem \eqref{GP variational 1} is rewritten as
\begin{equation}\label{GP variational 3}
 {
	u = \argmin_{\psi \in V_1} J_5(B(\bm{x})\psi(\bm{x};\theta)),
}
\end{equation}
where $V_1 = H^1(\Omega)$ and $B(\bm{x})$ is the same as of linear eigenvalue problems.

It is noted that if $u$ is the solution of \eqref{GP variational 3}, the eigenfunction is its normalization: $u/\|u\|_0$ (still denoted by $u$ for simplicity).  Thus, the smallest eigenvalue can be computed by
\[
\lambda = (\nabla_{\bm{x}} u, \nabla_{\bm{x}} u)+(V(\bm{x})u, u) + \beta(u^3,u).
\]

\section{Phase II of Int-Deep: Traditional Iterative Methods}\label{sec:II}
We propose to use deep learning solutions as initial guesses so as to achieve a high-accurate solution by traditional iterative methods in a few iterations (e.g., Newton's method for solving nonlinear systems \cite{StoerBulirsch2002}\cite{GaoYangMeza2009} or the two grid methods in the context of finite elements \cite{Xu1994,Xu1996,XuZhou2001}\cite{CancesChakirHeMaday}). In fact, these ideas have led to high-performance methods for solving semilinear elliptic problems and eigenvalue problems with the effectiveness shown by mathematical theory and numerical simulation. The key analysis of this idea is to characterize the conditions under which deep learning solutions can help traditional iterative methods converge quickly. We will provide several classes of examples and the corresponding analysis to support this idea as follows.
{ 
In what follows, $u^{DL}$ denotes the numerical solution obtained by the deep learning algorithm, and $I_h$ is the usual nodal interpolation operator (cf. \cite{BrennerScott2008,Ciarlet1978}).}

\subsection{Semilinear Elliptic Equations with Equality Constrains}\label{sec:sl}

Consider the following semilinear elliptic equation
\begin{equation}\label{eq: semilinear}
	\begin{cases}
		-\Delta u + f(\bm{x},u) = 0 &\quad \mbox{in} \; \Omega,
		\\
		u = 0 &\quad \mbox{on} \; \partial\Omega,
	\end{cases}	
\end{equation}
where $\Omega$ is a bounded convex polygon in $\mathbb{R}^d$ ($d=1,2$), and $f(\bm{x},u)$ is a sufficiently smooth function. For simplicity, we use $f(u)$ for $f(\bm{x},u)$  and $f'$ for $f_u$ in what follows.

Let $V=H_0^1(\Omega)$. Then the variational formulation of problem \eqref{eq: semilinear} is to find $u\in V$ such that
\begin{equation}
\label{v-form}
a(u,\chi):=(\nabla u, \nabla \chi) + (f(u), \chi)=0,\quad \chi\in V.
\end{equation}

For any $v\in L^{\infty}(\Omega)$, define
\begin{equation}
\label{bilinear}
a_v(\phi,\chi) := (\nabla \phi, \nabla \chi) + (f'(v)\phi, \chi),\quad \phi, \  \chi\in V.
\end{equation}

As in \cite{Xu1994,Xu1996}, we assume that problem \eqref{eq: semilinear} (equivalently, problem \eqref{v-form})  satisfies the following conditions:
\begin{itemize}
	\item[A1]  Any solution $u$ of \eqref{eq: semilinear} has the regularity $u \in W^{2,\infty}(\Omega)$.

	\item[A2]  Let $u \in W^{2,\infty}(\Omega)$ be a solution of \eqref{eq: semilinear}. Then there exists a positive constant $C^u$ such that
	\[
		C^u \| \phi \|_1 \leq \sup_{\chi\in V} \frac{a_u(\phi,\chi)}{\| \chi \|_1},
		\quad \phi \in V.
	\]
\end{itemize}

We next consider the finite element method for solving problem \eqref{v-form}. Let $\mathcal{T}_h$ be a quasi-uniform and shape regular triangulation of $\Omega$ into $K$. Write $h_K = \text{diam}(K)$ and $h:= \max_{K\in\mathcal{T}_h} h_K$. Introduce the Courant element space by
\begin{equation}\label{eq:Vh}
	V_h = \{ \phi\in C(\bar{\Omega}): \phi|_K \in \mathbb{P}_1(K) \mbox{ for all } K\in \mathcal{T}_h  \} \cap V,
\end{equation}
where $\mathbb{P}_1(K)$ denotes the function space consisting of all linear polynomials over $K$. Then the finite element method is to find $u^h \in V_h$ such that
\begin{equation}
\label{fem-semilinear}
	a(u^h,\chi)=0, \quad \chi \in V_h.
\end{equation}

Now, let us introduce Int-Deep for solving the problem in \eqref{fem-semilinear}. Concretely speaking, we choose $I_h u^{DL}$ as the initial guess {  and obtain a highly accurate approximate solution by Newton's method.}

\begin{algorithm}[H]
	\caption{A hybrid Newton's method for semilinear problems}\label{alg: semilinear}
	\textbf{Input}: the target accuracy $\epsilon$, the maximum number of iterations $N_{\max}$,
	the approximate solution in a form of a DNN $u^{DL}$ in Phase I of Int-Deep.\\
	\textbf{Output}: $u^h = u^h_{k+1}$.
	\begin{algorithmic}
		\STATE {\textbf{Initialization}: Let $u_0^h = I_h u^{DL}$, $k=0$, and $e_k=1$;}
		\WHILE {$e_k > \epsilon$ and $k < N_{\max}$}
		\STATE { Find $v_k^h \in V_h$ such that
		 	\[
 				(\nabla v_k^h, \nabla \chi) + (f'(u_k^h)v_k^h, \chi) = -(\nabla u_k^h, \nabla \chi) - (f(u_k^h), \chi),
 				\quad  \chi \in V_h.
			\]
			 Let $u_{k+1}^h = u_{k}^h + v_k^h$.}
		\STATE {$e_{k+1} = \| u_{k+1}^h - u_k^h \|_0 / \| u_k^h \|_0$, $k = k+1$.}
		\ENDWHILE
	\end{algorithmic}
\end{algorithm}

Next, we turn to discuss the convergence of the  Algorithm~\ref{alg: semilinear}. In the theorem below, we introduce a discrete maximum norm $\|\cdot\|_{0,\infty,h}$ to quantify errors. Let $\Omega_h$ consist of all the vertices of the triangulation $\mathcal{T}_h$ of $\Omega$. Then for any $v\in C(\bar{\Omega})$, $\|v\|_{0,\infty,h}=\max_{\bm x \in \Omega_h}|v(\bm x)|$. 

{ 
In what follows, to simplify the presentation, for any two quantities $a$ and $b$, we write ``$a\lesssim b$" for ``$a\le C b$", where $C$ is a generic positive constant independent of $h$, which may take different values at different occurrences. Moreover, any symbol $C$ or $c$ (with or without superscript or subscript) denote positive generic constants independent of the finite element mesh size $h$.
}

Let us define a neighborhood of $u$ as follows:
\begin{equation}
\label{neighborhood}
B(u) = \{v \in V: \|v-u\|_{0,\infty} \leq C_1^u \},
\end{equation}
where $C^u_1$ is a positive constant given in Lemma \ref{lem: infsup}. Then we have the following theorem showing the convergence of Algorithm \ref{alg: semilinear}.

\begin{theorem}
\label{thm: semilinear PDE}
Let $u \in W^{2,\infty}(\Omega)$ be a solution of \eqref{eq: semilinear} and $u_k^h$  the function sequence formed by  Algorithm~\ref{alg: semilinear}. Let $B(u)$ be a neighborhood of $u$ given by \eqref{neighborhood}. Assume $u_k^h \in B(u)$ for $k=0,1,\cdots$. Write $\delta =\|u - u^{DL}\|_{0,\infty,h}$. Then there exist two positive constants {  $\bar{h}_0$} ($\bar{h}_0<h_2$) and {  $\bar{\delta_0}$} such that if $h<\bar{h}_0$ and $\delta<\bar{\delta_0}$, there holds
\begin{align*}
	& \| u - u_k^h \|_{0,\infty} \lesssim h^2 + \beta_1^{2^k} \quad \quad \quad \quad \quad \quad \,\,\,  {\rm for} \ d =1,
	\\
	& \| u - u_k^h \|_{0,\infty} \lesssim h^2|\log(h)| + h^{-2/p}\beta_2^{2^k} \quad {\rm for} \ d =2,
 \end{align*}
where $h_2$ is a positive constant given in Lemma \ref{lem: error estimates},
\[
\beta_1 = c_0c_1(h^2 + \delta)<1,\quad \beta_2 = c_2c_3(h^2|\log(h) + \delta|)<1,
\]
with $c_i$ ($0\le i\le 3$) as positive constants.
\end{theorem}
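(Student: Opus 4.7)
The plan is to decompose the analysis into three ingredients: a standard finite element error bound $\|u-u^h\|_{0,\infty}$ for the nonlinear problem, a quadratic-convergence estimate for the Newton iteration measured against the discrete solution $u^h$, and a bound on the initial error $\|I_h u^{DL}-u^h\|_{0,\infty}$ that exploits the closeness of $u^{DL}$ to $u$ at the mesh nodes. The triangle inequality $\|u-u_k^h\|_{0,\infty}\le \|u-u^h\|_{0,\infty}+\|u^h-u_k^h\|_{0,\infty}$ then produces the claimed bound, the first summand being delivered by Lemma~\ref{lem: error estimates} (which, under A1--A2 and for $h<h_2$, gives $\|u-u^h\|_{0,\infty}\lesssim h^2$ in $d=1$ and $\lesssim h^2|\log h|$ in $d=2$), while the second summand is extracted by iterating the Newton recursion.

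For the Newton step, I would subtract $a(u^h,\chi)=0$ from the linear system satisfied by $u_{k+1}^h=u_k^h+v_k^h$ in Algorithm~\ref{alg: semilinear} and invoke the Taylor expansion $f(u_k^h)-f(u^h)=f'(u_k^h)(u_k^h-u^h)-\tfrac{1}{2}f''(\tilde u_k)(u_k^h-u^h)^2$. This yields the error equation
\[
(\nabla(u_{k+1}^h-u^h),\nabla\chi)+(f'(u_k^h)(u_{k+1}^h-u^h),\chi)=\tfrac{1}{2}(f''(\tilde u_k)(u_k^h-u^h)^2,\chi),\qquad\chi\in V_h.
\]
Because $u_k^h\in B(u)$, Lemma~\ref{lem: infsup} supplies a discrete inf--sup condition for the perturbed bilinear form $a_{u_k^h}(\cdot,\cdot)$ on $V_h$; combined with an $L^p$ duality argument on the linearised operator and the quadratic right-hand side above, this produces the recursion $\|u^h-u_{k+1}^h\|_{0,\infty}\le C\|u^h-u_k^h\|_{0,\infty}^2$ in $d=1$, and in $d=2$ a recursion of the form $\|u^h-u_{k+1}^h\|_{0,\infty}\le Ch^{-2/p}\|u^h-u_k^h\|_{0,\infty}^2$ obtained after lifting the $L^p$-bound via the inverse estimate $\|v_h\|_{0,\infty}\lesssim h^{-2/p}\|v_h\|_{0,p}$ on $V_h$.

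To control the initial error I would use that the nodal interpolant $I_h$ is piecewise linear, so $\|I_hu^{DL}-I_hu\|_{0,\infty}=\|u^{DL}-u\|_{0,\infty,h}=\delta$; combined with the standard interpolation estimate $\|u-I_hu\|_{0,\infty}\lesssim h^2\|u\|_{2,\infty}$ and the FEM bound of the first paragraph, this gives $\|u^h-u_0^h\|_{0,\infty}\lesssim h^2+\delta$ in $d=1$ (respectively $\lesssim h^2|\log h|+\delta$ in $d=2$). Plugging this into the quadratic recursion from the previous paragraph and unrolling $k$ steps produces exactly the factors $\beta_1^{2^k}$ and $\beta_2^{2^k}$, with $\beta_1=c_0c_1(h^2+\delta)$ and $\beta_2=c_2c_3(h^2|\log h|+\delta)$. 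One then chooses $\bar h_0$ and $\bar\delta_0$ small enough that $\beta_1<1$ (resp.\ $\beta_2<1$); the standing assumption $u_k^h\in B(u)$ keeps the perturbed discrete inf--sup condition valid uniformly in $k$, so the recursion can indeed be iterated to completion.

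The hard part is the two-dimensional case of the Newton step: Newton's natural quadratic estimate lives in a weaker norm than $L^\infty$, and upgrading to $L^\infty$ on $V_h$ requires a careful $L^p$ duality argument on the linearised operator together with the discrete inverse inequality, which is precisely where the prefactor $h^{-2/p}$ appears and, correspondingly, where the stringent smallness thresholds on $h$ and $\delta$ become necessary to ensure $\beta_2<1$. A subtle related difficulty is verifying that the perturbed discrete form $a_{u_k^h}(\cdot,\cdot)$ retains an inf--sup constant independent of $k$ on $V_h$ whenever $u_k^h$ stays in $B(u)$; this is what ties the neighbourhood defined in \eqref{neighborhood} to the validity of the quadratic recursion.
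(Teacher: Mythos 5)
Your overall skeleton coincides with the paper's: the triangle inequality $\|u-u_k^h\|_{0,\infty}\le\|u-u^h\|_{0,\infty}+\|u^h-u_k^h\|_{0,\infty}$ with the first term supplied by Lemma~\ref{lem: error estimates}, a quadratic Newton recursion for $u^h-u_k^h$ obtained by subtracting $a(u^h,\chi)=0$, Taylor-expanding $f$, and invoking the discrete inf--sup condition (the paper packages this as Lemma~\ref{lem: recursion}, proved via Lemma~\ref{lem: discrete-infsup} rather than Lemma~\ref{lem: infsup}, but that is a labelling quibble), and the initial bound $\|u^h-u_0^h\|_{0,\infty}\le c_0(h^2+\delta)$ from the interpolation estimate and $\|I_hu-I_hu^{DL}\|_{0,\infty}=\delta$. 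The $d=1$ argument is correct and identical in substance to the paper's.

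There is, however, a genuine gap in your $d=2$ case. You propose the recursion $\|u^h-u_{k+1}^h\|_{0,\infty}\le Ch^{-2/p}\|u^h-u_k^h\|_{0,\infty}^2$ and then claim that unrolling it yields $\beta_2^{2^k}$ with $\beta_2=c_2c_3(h^2|\log h|+\delta)$. It does not: writing $a_k=\|u^h-u_k^h\|_{0,\infty}$ and $M=Ch^{-2/p}$, the recursion $a_{k+1}\le Ma_k^2$ unrolls to $a_k\le M^{2^k-1}a_0^{2^k}=h^{2/p}\bigl(Mh^{-2/p}\cdot h^{2/p}a_0\bigr)^{2^k}\cdot M^{-1}\cdot M^{2^k}a_0^{2^k}$ --- more plainly, $Ma_k\le(Ma_0)^{2^k}$, so the effective base of the double exponential is $Ch^{-2/p}(h^2|\log h|+\delta)$, not $c_2c_3(h^2|\log h|+\delta)$. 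The factor $h^{-2/p}$ is squared at every step and ends up inside $\beta_2$, which both fails to reproduce the theorem's stated bound (where $h^{-2/p}$ appears exactly once, outside $\beta_2^{2^k}$) and forces the mesh-dependent smallness condition $\delta\lesssim h^{2/p}$ instead of an $h$-independent threshold $\bar{\delta}_0$. The paper avoids this by never leaving the $L^p$ norm during the iteration: Lemma~\ref{lem: recursion} gives $\|E_{k+1}^h\|_1\lesssim\|E_k^h\|_{0,p}^2$, the Sobolev embedding $H^1\hookrightarrow L^p$ (valid for all finite $p$ in two dimensions) closes the recursion as $\|E_{k+1}^h\|_{0,p}\lesssim\|E_k^h\|_{0,p}^2$ with no inverse-inequality loss, this is unrolled to $\|E_k^h\|_{0,p}\le\beta_2^{2^k}/c_2$, and only then is the single conversion $\|E_k^h\|_{0,\infty}\lesssim h^{-2/p}\|E_k^h\|_{0,p}$ applied. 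You should restructure your two-dimensional argument accordingly; the rest of your proposal stands.
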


The proof of Theorem \ref{thm: semilinear PDE} can be found in Appendix A. According to the second-order convergence in $\beta_1$ and $\beta_2$ in the above theorem, one can use only a few Newton's iterations to achieve a numerical solution with the same accuracy as in the finite element solution $u^h$. The forthcoming numerical results will demonstrate this theoretical estimate. {  As shown in reference \cite{Xu1994} (see also Lemma \ref{convergence} in Appendix A), we find that under the condition $h<\bar{h}_0<h_2$ given in the above theorem, the finite element method \eqref{fem-semilinear} has a unique solution $u^h$ around a given exact solution $u$.} Moreover, the analysis developed here can be applied to high order finite element methods.

\subsection{Eigenvalue Problems}\label{subsec: Eigenvalue}

In this subsection, we propose and analyze Int-Deep for solving eigenvalue problems in the similar spirit of the two grid method due to Xu and Zhou \cite{XuZhou2001}. We first discuss how to design efficient methods for linear eigenvalue problems. For simplicity, let us consider the following problem
\begin{equation}\label{eq: eigen Dirichlet}
	\begin{cases}
		-\Delta u = \lambda u &\quad \mbox{in} \ \Omega,
		\\
		u =0 &\quad \mbox{on} \ \partial\Omega.
	\end{cases}
\end{equation}

The variational problem for \eqref{eq: eigen Dirichlet} is given as follows. Find the smallest number $\lambda$ and a nonzero function $u\in V$ such that
 \begin{equation}
 \label{v-form1}
		a(u,\chi) = \lambda (u, \chi), \quad \chi\in V,
\end{equation}
where $V=H^1_0(\Omega)$ and $a(u,\chi)=(\nabla u,\nabla \chi)$. Without loss of generality, assume $\|u\|_0=1$.

In the discretization, we use the same finite element space $V_h$ as given in the last subsection. Hence, the finite element method for \eqref{v-form1} is to find the smallest number $\lambda^h$ and a nonzero function $u^h\in V_h$ such that
 \begin{equation}
 \label{fem-eigenvalue}
		a(u^h,\chi) = \lambda^h (u^h, \chi), \quad \chi\in V_h.
\end{equation}

Now, let us introduce Int-Deep for solving the problem in \eqref{fem-eigenvalue}. Concretely speaking, we choose the normalized $I_h u^{DL}$ as the initial guess {  and obtain a highly accurate approximate solution by a certain iterative scheme.}
The iterative scheme is motivated by the two grid method for eigenvalue problems due to Xu and Zhou \cite{XuZhou2001}. The method is essentially the power method for eigenvalue problems and the eigenvalue computation is accelerated by the Rayleigh quotient.

\begin{algorithm}
	\caption{Int-Deep for linear eigenvalue problems}\label{alg: eigen}
	\textbf{Input}: the target accuracy $\epsilon$, the maximum number of iterations $N_{\max}$, the approximate solution in a form of a DNN $u^{DL}$ in Phase I of Int-Deep.\\
	\textbf{Output}: $u^h=u^h_{k+1}, \lambda^h = \lambda^h_{k+1}$.
	\begin{algorithmic}
		\STATE {\textbf{Initialization}: Let $u_0^h = I_h u^{DL} / \| I_h u^{DL} \|_0$, $\lambda_0^h = |u_0^h|_1^2/ \| u_0^h \|_0^2$, $k=0$, $e_k=1$;}
		\WHILE {$e_k > \epsilon$ and $k < N_{\max}$}
		\STATE {Find $u_{k+1}^h \in V_h$ such that
		 	\begin{align*}
 				a(u_{k+1}^h,\chi) &=\lambda_{k}^h(u_k^h, \chi) , \quad \chi \in V_h,
 				\\
 				\lambda_{k+1}^h &= \frac{|u_{k+1}^h|_1^2}{\| u_{k+1}^h \|_0^2};
			\end{align*}
			}
		\STATE {$e_{k+1} = \| u_{k+1}^h - u_k^h \|_0 / \| u_k^h \|_0$, $k = k+1$.}
		\ENDWHILE
	\end{algorithmic}
\end{algorithm}

{  In each iteration step of the ``while" loop in Algorithm \ref{alg: eigen}, typical numerical methods would normalize the approximate eigenfunction to enforce $\|u_k^h\|_0=1$, which might help to speedup the convergence. However, we will only prove the convergence of Algorithm \ref{alg: eigen} without the normalization step as follows for simplicity.} We define the elliptic projection operator $P_h$ that projects any $u\in V$ to $P_h u\in V_h$ such that $a(P_hu, \chi) = a(u,\chi), \quad \chi \in V_h$. The analysis of this operator is well-known (cf. \cite{BrennerScott2008,Ciarlet1978}) but we quote an important result (cf. \cite{BabuskaOsborn1989,BabuskaOsborn1991}) below.

\begin{lemma}\label{lem: eigenvlue}
	Let $(\lambda, u)$ be an eigenpair of \eqref{v-form1}. For any $\phi \in H_0^1(\Omega)\setminus\{0\}$, there holds
	\[
		\frac{a(\phi,\phi)}{(\phi,\phi)} - \lambda = \frac{a(\phi - u, \phi - u)}{(\phi,\phi)} - \lambda\frac{(\phi - u, \phi - u)}{(\phi, \phi)}.
	\]
\end{lemma}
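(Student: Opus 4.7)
The identity to prove is a purely algebraic manipulation that rests on (i) bilinearity and symmetry of $a(\cdot,\cdot)$ and $(\cdot,\cdot)$, and (ii) the eigenvalue relation $a(u,\chi)=\lambda(u,\chi)$ for every test function $\chi\in V=H_0^1(\Omega)$. No regularity, no finite element theory, and no approximation estimates enter the argument.

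My plan is to expand the two quadratic forms $a(\phi-u,\phi-u)$ and $(\phi-u,\phi-u)$ and then reduce the right-hand side to the left-hand side by subtracting. Concretely, I would first write
\[
a(\phi-u,\phi-u)=a(\phi,\phi)-2a(\phi,u)+a(u,u),\qquad (\phi-u,\phi-u)=(\phi,\phi)-2(\phi,u)+(u,u).
\]
Since $u\in V$, I may take $\chi=u$ and $\chi=\phi$ in \eqref{v-form1} to obtain $a(u,u)=\lambda(u,u)$ and, using the symmetry of $a(\cdot,\cdot)$, $a(\phi,u)=a(u,\phi)=\lambda(u,\phi)=\lambda(\phi,u)$. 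Substituting these two equalities into the expansion of $a(\phi-u,\phi-u)$ yields
\[
a(\phi-u,\phi-u)=a(\phi,\phi)-2\lambda(\phi,u)+\lambda(u,u).
\]

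Finally, dividing the two displayed expansions by $(\phi,\phi)$ (which is nonzero because $\phi\neq 0$) and forming the combination appearing on the right-hand side of the claim, the $-2\lambda(\phi,u)$ cancels with $+2\lambda(\phi,u)$, and the $\lambda(u,u)$ term cancels with $-\lambda(u,u)$, leaving precisely
\[
\frac{a(\phi,\phi)-\lambda(\phi,\phi)}{(\phi,\phi)}=\frac{a(\phi,\phi)}{(\phi,\phi)}-\lambda,
\]
which is the left-hand side. No step presents any real obstacle; the only thing to be careful about is the symmetry of $a(\cdot,\cdot)$ when exchanging the roles of $\phi$ and $u$ in the eigenvalue relation, but this is immediate since $a(\phi,\chi)=(\nabla\phi,\nabla\chi)$.
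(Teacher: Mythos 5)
Your proof is correct and complete: the expansion of the two quadratic forms, the substitution of $a(u,u)=\lambda(u,u)$ and $a(\phi,u)=\lambda(\phi,u)$ via the eigenvalue relation with test functions $\chi=u$ and $\chi=\phi$, and the cancellation after dividing by $(\phi,\phi)\neq 0$ are exactly the standard argument for this Rayleigh-quotient identity. The paper itself gives no proof — it quotes the lemma from the Babu\v{s}ka--Osborn references — so there is no in-paper argument to compare against, but yours is the expected one and has no gaps.
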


With the help of the above results, we can derive the following theorem.

\begin{theorem}
\label{thm: eigenvalue problem}
Let $\lambda$ be the smallest eigenvalue of \eqref{eq: eigen Dirichlet} and $u \in H^2(\Omega)$ be the corresponding eigenfunction with $\|u\|_0=1$, respectively. Denote $\tilde{\delta} =\max\{| \lambda - \lambda_0^h|, \|u-u^{DL}\|_{0,\infty,h}\} $. Assume that the sequence $\|u_k^h\|_0$ is bounded below and above by two positive constants $\varepsilon_0$ and $\varepsilon_1$, respectively.
{ 
Then there exist two positive constants $\tilde{h_0}$ and $\tilde{\delta_0}$ such that if $h < \tilde{h_0}$ and $\tilde{\delta} < \tilde{\delta_0}$, there holds
\begin{equation}
\label{eigen-estimate}
 |\lambda - \lambda_k^h| + \| u - u_{k}^h \|_0 \lesssim \beta_3^{2^k} + h^2,
\end{equation}
where $\beta_3 = c_4c_6(\tilde{\delta} + h^2)$, $c_4$ and $c_6$ are generic positive constants independent of the finite element mesh size $h$.}
\end{theorem}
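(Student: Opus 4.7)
The plan is to mirror the two-grid philosophy of Xu-Zhou by deriving a single-step quadratic error recursion and iterating it. The three ingredients to be combined are: (i) control of the initialization error from $u^{DL}$ after nodal interpolation and normalization, (ii) an error identity obtained by subtracting the exact and discrete eigenvalue equations tested against $\chi \in V_h$, and (iii) the Rayleigh quotient refinement of Lemma \ref{lem: eigenvlue}. First I would bound the initial error: since $u_0^h = I_h u^{DL}/\|I_h u^{DL}\|_0$ and $\|u\|_0 = 1$, standard nodal interpolation together with $\|u - u^{DL}\|_{0,\infty,h} \le \tilde\delta$ and the lower bound $\|u_0^h\|_0 \ge \varepsilon_0$ yield $\|u - u_0^h\|_0 \lesssim \tilde\delta + h^2$, while the definition of $\tilde\delta$ furnishes $|\lambda - \lambda_0^h| \le \tilde\delta$ directly.

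Next I would derive the one-step identity. Subtracting $a(u,\chi) = \lambda(u,\chi)$ from $a(u_{k+1}^h,\chi) = \lambda_k^h (u_k^h,\chi)$ for $\chi \in V_h$ produces
\begin{equation*}
a(u - u_{k+1}^h,\chi) = \lambda(u-u_k^h,\chi) - (\lambda_k^h-\lambda)(u_k^h,\chi).
\end{equation*}
Passing through the elliptic (Ritz) projection $P_h u$, so that $a(u - P_h u,\chi) = 0$ on $V_h$, and testing with $\chi = P_h u - u_{k+1}^h \in V_h$ gives $\|P_h u - u_{k+1}^h\|_1 \lesssim \|u - u_k^h\|_0 + |\lambda - \lambda_k^h|$. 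An Aubin-Nitsche duality argument using $H^2$-regularity of the Poisson problem on the convex polygon $\Omega$ upgrades this to an $L^2$-bound, and combining with $\|u - P_h u\|_0 \lesssim h^2$ yields $\|u - u_{k+1}^h\|_0 \lesssim h^2 + \|u-u_k^h\|_0 + |\lambda_k^h - \lambda|$. The quadratic character then enters via Lemma \ref{lem: eigenvlue}: $|\lambda_k^h - \lambda| \lesssim \|u - u_k^h\|_1^2$, and using an inverse estimate on $V_h$ to control the $H^1$-error by a combination of $h$ and $h^{-1}\|u-u_k^h\|_0$, the eigenvalue contribution becomes quadratic in the previous $L^2$-error up to the $h^2$ baseline. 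A companion application of Lemma \ref{lem: eigenvlue} at step $k+1$ delivers $|\lambda - \lambda_{k+1}^h| \lesssim \|u - u_{k+1}^h\|_1^2$.

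Finally I would iterate. Setting $E_k := \|u - u_k^h\|_0 + |\lambda - \lambda_k^h|$, the preceding estimates collapse into a recursion of the form $E_{k+1} \lesssim h^2 + c_4c_6(\tilde\delta + h^2) E_k$ (or, after more careful book-keeping, a genuinely quadratic bound $E_{k+1} \lesssim h^2 + C E_k^2$). The smallness assumptions $\tilde\delta < \tilde\delta_0$ and $h < \tilde h_0$ guarantee that $\beta_3 = c_4c_6(\tilde\delta + h^2) < 1$, and a straightforward induction then yields $E_k \lesssim \beta_3^{2^k} + h^2$, as claimed. The principal technical obstacle is the one-step analysis in the $L^2$-norm: the naive identity only offers a linear dependence on $\|u - u_k^h\|_0$, so the genuinely quadratic improvement must be squeezed out of Lemma \ref{lem: eigenvlue} together with duality and inverse estimates, all while keeping the pre-factors independent of $h$ and harmlessly absorbing the normalization constraint $\|u_k^h\|_0 \in [\varepsilon_0,\varepsilon_1]$.
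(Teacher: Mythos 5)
Your overall architecture matches the paper's: bound the initial error by $c_6(\tilde\delta+h^2)$, derive a one-step estimate via the Ritz projection and the identity obtained by subtracting $a(u,\chi)=\lambda(u,\chi)$ from $a(u_{k+1}^h,\chi)=\lambda_k^h(u_k^h,\chi)$, use Lemma \ref{lem: eigenvlue} for the eigenvalue error, and iterate. However, there is a genuine gap at exactly the point you flag as ``the principal technical obstacle,'' and your proposed fix does not close it. Your $L^2$ one-step bound $\|u-u_{k+1}^h\|_0 \lesssim h^2 + \|u-u_k^h\|_0 + |\lambda-\lambda_k^h|$ has discarded the factor of $h$ that the Aubin--Nitsche argument provides, and that factor is the entire source of quadratic convergence here: the paper keeps $\|u-u_{k+1}^h\|_0 \lesssim h\|u-u_{k+1}^h\|_1 \lesssim h^2 + h\,e_k$ and then applies the elementary inequality $h\,e_k \le \tfrac12(h^2+e_k^2)$ to land directly on $h^2+e_k^2$; similarly $|\lambda-\lambda_{k+1}^h|\lesssim \|u-u_{k+1}^h\|_1^2 \lesssim (h+e_k)^2 \lesssim h^2+e_k^2$. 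Your alternative route --- recovering quadraticity from Lemma \ref{lem: eigenvlue} at step $k$ combined with an inverse estimate $\|u-u_k^h\|_1\lesssim h+h^{-1}\|u-u_k^h\|_0$ --- fails: squaring produces a cross term that is merely \emph{linear} in $\|u-u_k^h\|_0$ and a quadratic term carrying an $h^{-2}$ prefactor, so the resulting recursion is neither a clean contraction nor a clean quadratic recursion with $h$-independent constants. Note also that your fallback linear recursion $E_{k+1}\lesssim h^2+\beta_3 E_k$ would only give geometric decay $\beta_3^k$, not the $\beta_3^{2^k}$ rate asserted in the theorem, so it cannot substitute for the quadratic bound.

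A secondary gap: even once the quadratic recursion $e_{k+1}\le c_4 e_k^2+\bar c_4 h^2$ is in hand, concluding $e_k\lesssim \beta_3^{2^k}+h^2$ is not the ``straightforward induction'' you claim. One must control the accumulation of the additive $h^2$ terms across all iterations of the squaring map; the paper isolates this as a separate lemma (Lemma \ref{B sequence}), comparing the sequence to the fixed points of $A=A^2+b$ and to the pure-squaring sequence $B_{k+1}=B_k^2$, under the smallness conditions $a_0<1/2$ and $b<1/4$ (which is where the thresholds $\tilde h_0$ and $\tilde\delta_0$ actually come from). Your proposal does not supply this argument.
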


{ 
Note that  $\|u_0^h\|_0=1$ and only very few iterations are required to derive the desired numerical solution in Algorithm \ref{alg: eigen} in practical implementation. Hence, it is reasonable to assume the sequence $\|u_k^h\|_0$ is bounded below and above by two positive constants. This assumption is also verified by our numerical experiments.
}

The proof of Theorem \ref{thm: eigenvalue problem} can be found in Appendix B.  Similar to Theorem \ref{thm: semilinear PDE}, in view of the second-order convergence in $\beta_3$ in the above theorem, one can use only a few iterations to achieve a numerical solution with the same accuracy as for the related finite element method. The forthcoming numerical results will demonstrate this theoretical estimate.


Now, let us turn to the nonlinear eigenvalue problem. For simplicity consider the Gross-Pitaevskii equation \eqref{eq: GP Dirichlet}.
The weak form of \eqref{eq: GP Dirichlet} is given as follow. Find the smallest number $\lambda$ and a non-negative function $u\in H_0^1(\Omega)$ such that
\begin{equation}\label{nonlinear eig variational2}
	\begin{cases}
	a(u,\chi) + (f(u), \chi) = \lambda(u, \chi), \quad \chi \in H_0^1(\Omega),
	\\
	\quad \| u \|_0 = 1,
	\end{cases}
\end{equation}
where
\[
	a(u,\chi) = (\nabla u, \nabla \chi)+ (V(\bm{x})u, \chi), \quad  f(u) = \beta u^3.
\]
Once we have obtained the eigenfunction $u(x)$, then the corresponding eigenvalue is $\lambda=a(u,u) + (f(u), u)$.

To discretize the above problem, we use the same finite element space $V_h$ in \eqref{eq:Vh}. Hence, the finite element method for \eqref{nonlinear eig variational2} is to find the smallest number $\lambda^h$ and a non-negative function $u^h\in V_h$ such that
 \begin{equation}
 \label{nonlinear fem-eigenvalue}
 	\begin{cases}
		a(u^h,\chi) + (f(u^h), \chi)= \lambda^h (u^h, \chi), \quad \chi\in V_h,
		\\
		\quad 
    { \| u^h \|_0 =1.}
	\end{cases}
\end{equation}
The error analysis of the finite element method for \eqref{nonlinear fem-eigenvalue}  can be found in \cite{Zhou2004,Cances2010}.

Finally, let us introduce an accelerated iteration method for solving \eqref{nonlinear fem-eigenvalue}. Though an iteration scheme can be derived by the two grid method for nonlinear elliptic eigenvalue problems due to Canc\`{e}s, Chakir, and He \cite{CancesChakirHeMaday}, we propose another iteration scheme motivated by Newton's method for nonlinear eigenvalue problems due to Gao, Yang and Meza \cite{GaoYangMeza2009}. Here, we only derive the algorithm for nonlinear eigenvalue problems and we will analyze the convergence of Algorithm~\ref{alg: nonlinear eig} in future work.

\begin{algorithm}[H]
	\caption{A hybrid Newton's method for nonlinear eigenvalue problems}\label{alg: nonlinear eigen}\label{alg: nonlinear eig}
	\textbf{Input}: the target accuracy $\epsilon$, the maximum number of iterations $N_{\max}$, the approximate solution in a form of a DNN $u^{DL}$ in Phase I of Int-Deep.\\
	\textbf{Output}: $u^h=u^h_{k+1}, \lambda^h = \lambda^h_{k+1} $.
	\begin{algorithmic}
		\STATE {\textbf{Initialization}: Let $u_0^h = I_h u^{DL}/ \| I_h u^{DL} \|_0 $, $\lambda_0^h = a(u_{0}^h, u_{0}^h) + (f(u_0^h) ,u_0^h)$, $k=0$, $e_k=1$;}
		\WHILE {$e_k > \epsilon$ and $k < N_{\max}$}
		\STATE {Find $(v_k^h, \mu_k^h) \in V_h \times \mathbb{R}$ such that
		 \begin{align*}	
 			a(v_k^h, \chi) + (f^{\prime}(u_{k}^h)v_k^h, \chi ) - \lambda_k^h(v_{k}^h, \chi) - \mu_{k}^h(u_k^h, \chi) &= -a(u_k^h,\chi) - (f(u_k^h), \chi) + \lambda_k^h(u_k^h, \chi),
			\ \chi \in V_h,
			\\
 			2(v_{k}^h, u_k^h) &= 1 - (u_k^h, u_k^h).
		\end{align*}
			Let $u_{k+1}^h = u_k^h + v_{k}^h, \lambda_{k+1}^h = \lambda_k^h + \mu_{k}^h$.
			}
		\STATE {$e_{k+1} = \| u_{k+1}^h - u_k^h \|_0 / \| u_k^h \|_0$, $k = k+1$.}
		\ENDWHILE
	\end{algorithmic}
\end{algorithm}

\section{Numerical Experiments}\label{sec: experiments}
This section consists of two parts. In the first part, we provide various numerical examples to illustrate the performance of the proposed deep learning-based methods. As we shall see, deep learning approaches are capable of providing approximate solutions to nonlinear problems in $O(100)$ iterations. However, continuing the iteration cannot further improve accuracy. In the second part, we will investigate the numerical performance of the Int-Deep framework in terms of network hyperparameters and the convergence analysis in the previous section. { {Though the hyper-parameters of Int-Deep to guarantee a good initial guess should be problem-dependent, as we shall see in various numerical examples in this section,}} DNNs of size $O(1)$ and trained with $O(100)$ iterations can provide good initial guesses enabling traditional iterative methods to converge in $O(\log(\frac{1}{\epsilon}))$ iterations to the $\epsilon$ precision of finite element methods.

{ 
In our numerical experience, the ResNet has a better numerical performance than FNN. }
Hence, without especial explanation, we always use the ResNet of width $50$ and depth $6$ with an activation function $\sigma(x) = \max \{ x^3, 0 \}$. Neural networks are trained by Adam optimizer \cite{KB2014} with a learning rate $\eta = 1\text{e}-03$. The batch size is $512$ for all 1D examples and $1024$ for all 2D examples. Deep learning algorithms are implemented by Python 3.7 using PyTorch 1.0. and a single NVIDIA Quadro P6000 GPU card. All finite element methods are implemented in MATLAB 2018b in an Intel Core i7, 2.6GHz CPU on a personal laptop with a 32GB RAM.

{ 
Let us first summarize notations used in this section. Suppose $u$ is the exact solution to a problem and $u_k^h$ is the approximation evaluated by the Int-Deep framework in the $k$-th iteration in the second phase. We denote the absolute difference between the ground truth and the numerical solution as
\begin{equation*}
	 e_k^h := u - u_k^h,
\end{equation*}
which will be measured with different norms to obtain the accuracy of our framework.
After completing iterations, $u^h$ is used as the approximate solution by traditional iterative methods, and $e^h$ is denoted as its absolute difference. Similarly, $e^{DL}$ is denoted as the absolute difference between the exact solution and the approximate solution by deep learning. In eigenvalue problems, let $\lambda$, $\lambda^{DL}$ and $\lambda^{h}$ denote the target eigenvalue, the approximate one by deep learning, and the approximate one by traditional methods, respectively. $\#$Epoch means the number of epochs in the Adam for deep learning and $\#$K stands for the number of iterations in the traditional iterative methods in Int-Deep. We always apply the Courant element method to solve the weak form in Algorithms $1$ to $3$.

We also summarize the numerical examples in this section in Table \ref{tab: summary} below, which could help the reader to better understand the structure of extensive numerical examples.

\begin{table}[H]
    \small
	\centering
	\caption{  Summary of numerical examples.}
		\begin{tabular}{cc||cc}
		\hline
		  Phase I  &      & &      Phase II with an initial guess by Phase I   \\
		\hline
		  Example 5.1 &  Phase I, Section 3.1   &   Example 5.4        &   Phase II, Section 4.1   \\
		  Example 5.2 &  Phase I, Section 3.2  &   Example 5.5      &   Phase II, Section 4.2, linear eigenvalue problem   \\
		  Example 5.3 &  Phase I, Section 3.3  &   Example 5.6     &   Phase II, Section 4.2, nonlinear eigenvalue problem   \\
		\hline
	\end{tabular}
	\label{tab: summary}
\end{table}
}

\subsection{Phase I of Int-Deep: Deep Learning Methods}

Let us first provide numerical examples to illustrate the performance of the proposed variational formulations for deep learning in Section \ref{sec:I}.

\subsubsection{Linear PDEs}

\begin{example}
\label{ex:VE}
Consider the second-order linear elliptic equation with the Dirichlet boundary condition in one dimension:
\begin{equation*}
	\begin{cases}
	- \frac{\dd}{\dx}\left(p(x)\frac{\dd u}{\dx} \right) + x^2 u(x) = f(x), \quad  & x \in  (-1,1),
	\\
	\quad u = 0, \quad & x = -1 \, \mbox{\rm or} \,  1 ,
	\end{cases}
\end{equation*}
with $p(x) = 1+x^2$ and $f(x) = \pi^2(1+x^2)\sin(\pi x)-2\pi x\cos(\pi x) + x^2\sin(\pi x)$. The exact solution of this problem is
\[
	u(x) = \sin(\pi x).
\]
\end{example}

In this experiment, the variational formulation in \eqref{eq: solvable optim theta} with a penalty constant $\gamma=500$ is applied in the deep learning method. To evaluate the test error, we adopt a mesh size $h=\frac{2}{512}$ to generate the uniform triangulation $\mathcal{T}_h$ as the test locations. The test errors during training are summarized in Table \ref{tab:VE} and Figure \ref{fig:VE}.
{  From Table \ref{tab:VE} and Figure \ref{fig:VE}, we know the absolute discrete maximum error is reduced to order 1e-3 after 300 epochs. However, the error oscillates after about 1000 epochs and it is difficult to get a highly accurate solution when \#Epoch increases. }

\begin{table}[H]
	\begin{minipage}[t]{0.3\linewidth}
		\small
		\centering
	\caption{Example \ref{ex:VE}}
	\begin{tabular}{cccc}
		\hline
		$\#$Epoch & $\|e^{DL}\|_{0,\infty,h}$\\
		\hline
    300     & {1.16e-3} \\
		500     & {6.09e-4} \\
		1000    & {9.36e-4} \\
		5000    & {3.09e-3} \\
		10000   & {2.49e-3} \\
		\hline
	\end{tabular}
	\label{tab:VE}
	\end{minipage}
	\begin{minipage}[t]{0.3\linewidth}
		\small
		\centering
	\caption{Example \ref{ex:VI}.}
	\begin{tabular}{cccc}
		\hline
		$\#$Epoch & $\|e^{DL}\|_{0,\infty,h}$ \\
		\hline
		300     &  {6.55e-2}  \\
		500     &  {5.67e-2}  \\
		1000    &  {6.78e-3}  \\
		5000    &  {4.27e-3}  \\
		10000   &  {2.22e-3}  \\
		\hline
	\end{tabular}
	\label{tab:VI}
	\end{minipage}
		\begin{minipage}[t]{0.4\linewidth}
		\small
		\centering
		\caption{Example \ref{ex:eig}.}
    \begin{tabular}{ccc}
			\hline
			$\#$Epoch & $|\lambda - \lambda^{DL}|$ & $\| u - u^{DL}\|_{0,\infty,h}$ \\
			\hline
      		300    &  {1.40e-1} &  {5.06e-2} \\
			500    &  {2.31e-2} &  {1.37e-2}  \\
			1000   &  {1.71e-2} &  {1.66e-2}  \\
			5000   &  {4.85e-3} &  {1.04e-2}  \\
			10000  &  {3.20e-3} &  {1.45e-2}  \\
			\hline
		\end{tabular}
		\label{tab:eig}
	\end{minipage}
\end{table}

\begin{figure}[H]
	\begin{minipage}[H]{0.31\linewidth}
		\centering
		\includegraphics[width = 5 cm]{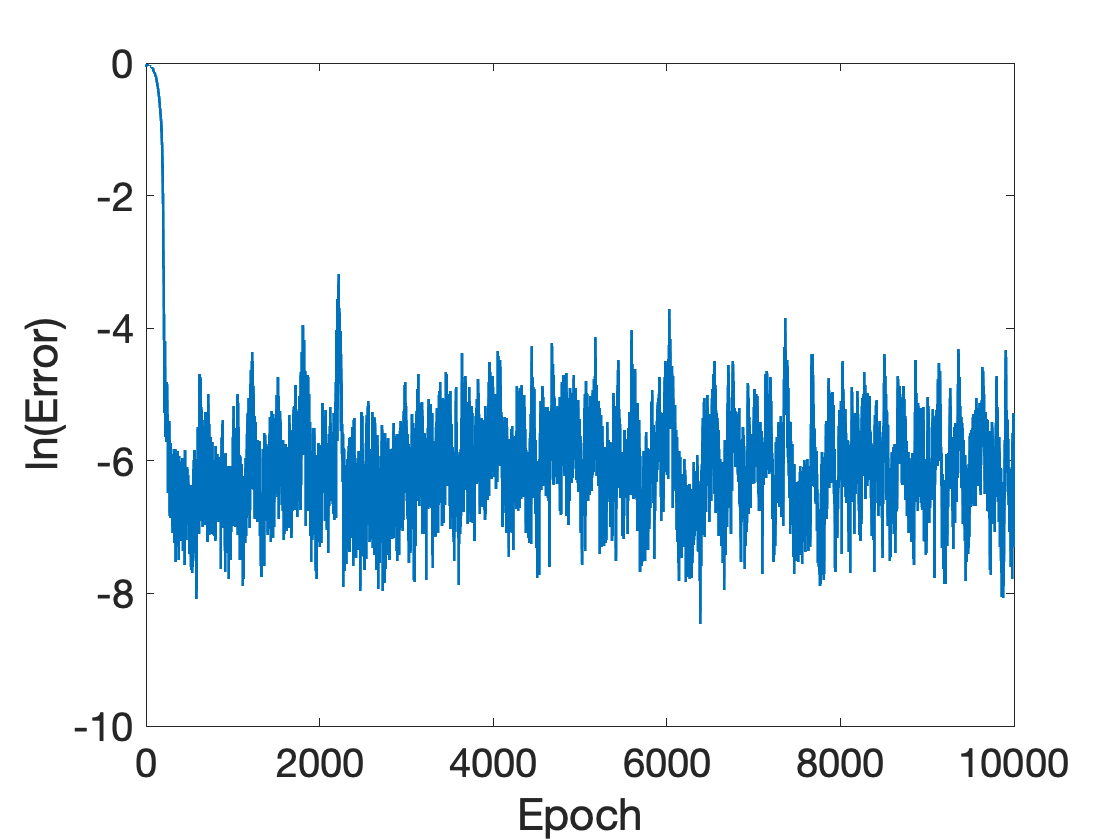}
		\caption{{ $e^{DL}$} of Example \ref{ex:VE}.}
		\label{fig:VE}
	\end{minipage}
	\begin{minipage}[H]{0.31\linewidth}
		\centering
		\includegraphics[width = 5 cm]{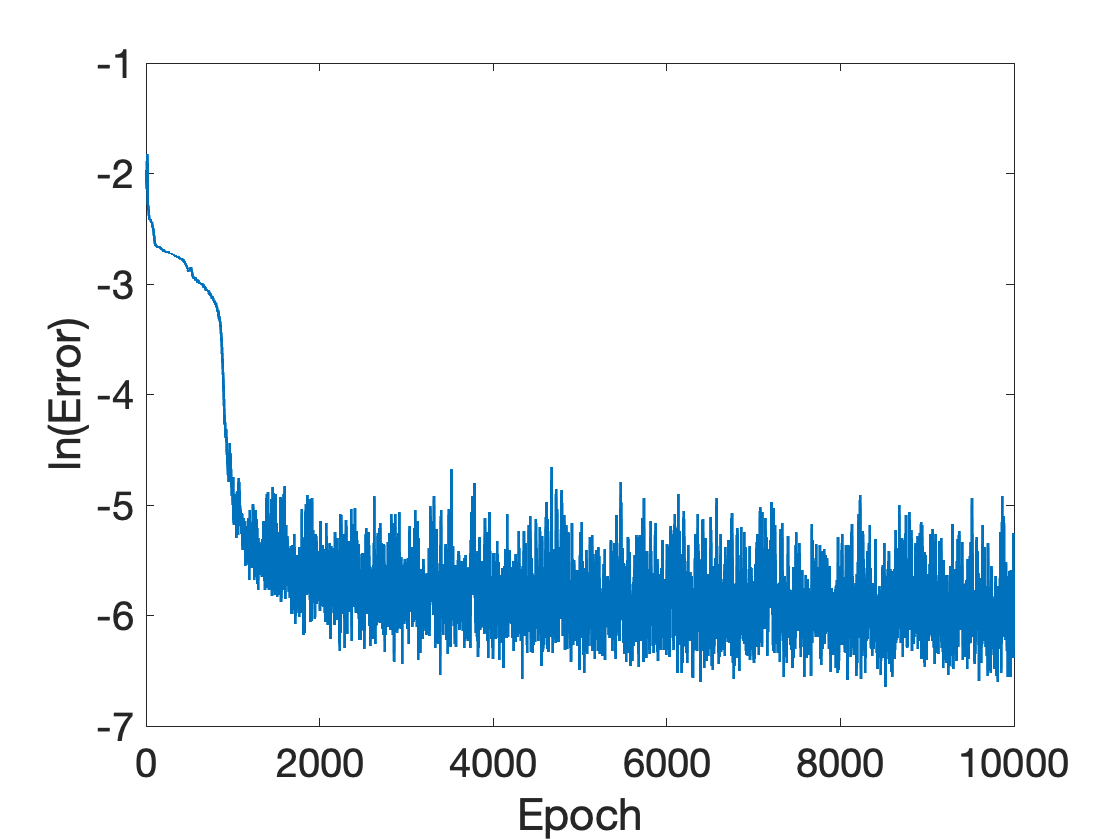}
		\caption{{ $e^{DL}$} of Example \ref{ex:VI}.}
		\label{fig:VI}
	\end{minipage}
		\begin{minipage}[H]{0.38\linewidth}
		\centering
		\includegraphics[width = 5 cm]{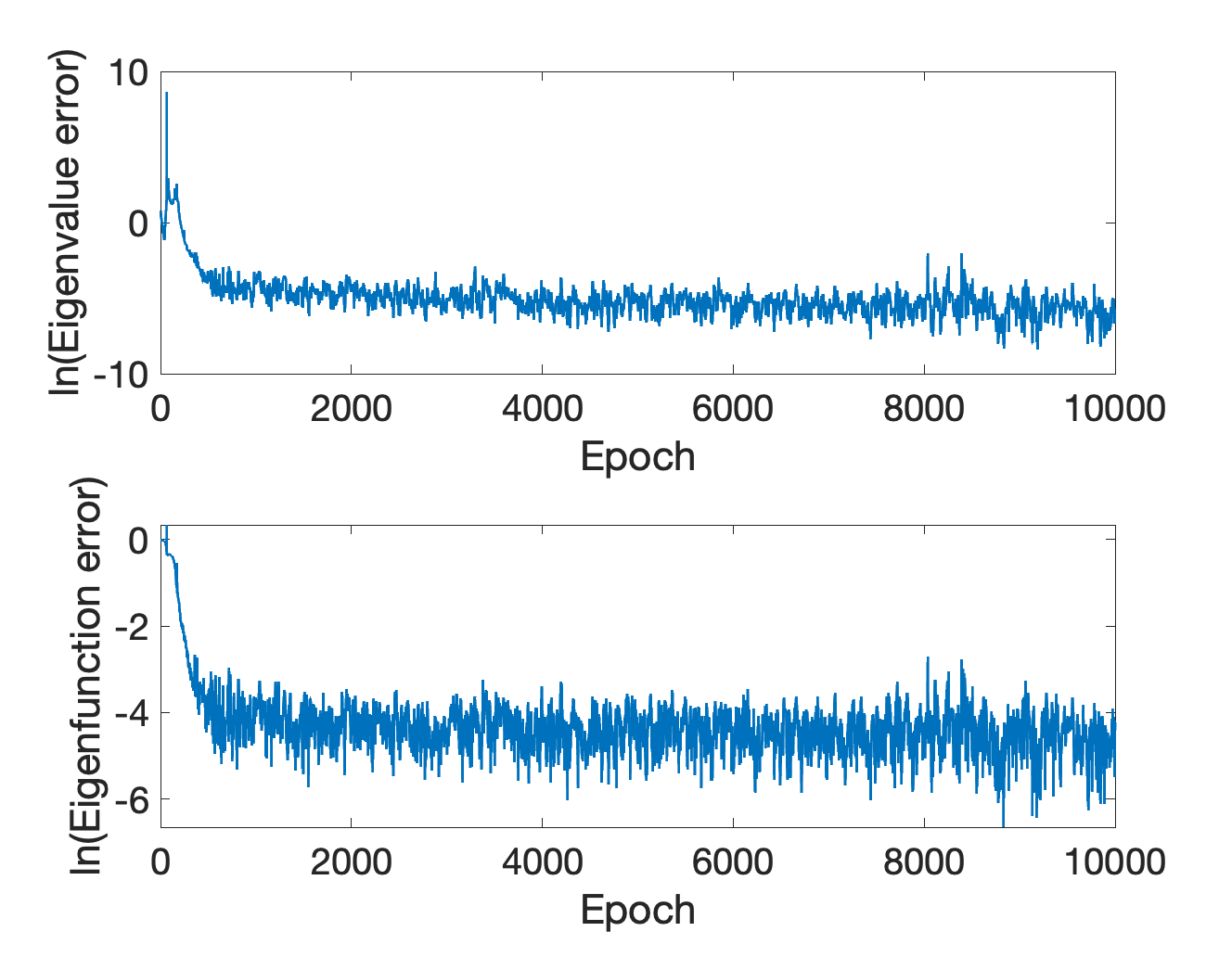}
		\caption{Test errors of Example \ref{ex:eig}.}
		\label{fig:eig}
	\end{minipage}
\end{figure}

\subsubsection{Variational Inequalities}

\begin{example}
\label{ex:VI}
Consider the simplified friction problem
\begin{equation*}
\label{ex: simplifiedfriction}
	\begin{cases}
		-\Delta u + u = f \quad & \mbox{\rm in} \ \Omega,
		\\
		|\partial _{\bm{n}}u| \leq g, \ u \partial_{\bm{n}}u + g|u| = 0 \quad & \mbox{\rm on} \ \Gamma_C,
		\\
		u = 0 \quad & \mbox{\rm on} \  \Gamma_D,
	\end{cases}	
\end{equation*}
where $\bm{n}$ is the outer normal vector, $\Omega = (0,1)^2$, $\Gamma_C = \{ 1 \} \times [0,1]$ and $\Gamma_D = \partial \Omega \setminus \Gamma_C$. We set $g=1$ and choose $f$ such that the problem has an exact solution $u(x, y) = (\sin x - x\sin 1) \sin(2\pi y)$.
\end{example}

In this experiment, the variational formula in \eqref{eq:VP2} with a penalty constant $\gamma = 500$ is applied in the deep learning method. To evaluate the test error, we adopt a mesh size $h=\frac{1}{128}$ to generate the uniform triangulation $\mathcal{T}_h$ as the test locations. The test errors during training are summarized in Table \ref{tab:VI} and Figure \ref{fig:VI}.
{  From Table \ref{tab:VI} and Figure \ref{fig:VI}, we know the absolute discrete maximum error is reduced to order 1e-3 after 1000 epochs. However, the error oscillates after 2000 epochs and it is difficult to get a highly accurate solution when \#Epoch increases.}

\subsubsection{Eigenvalue Problems}

\begin{example}
\label{ex:eig}
Consider the following problem
\begin{equation*}
	\begin{cases}
		- u^{\prime \prime} = \lambda u, \quad & x \in \ (0,1),
		\\
		u = 0, \quad & x = 0 \ \mbox{\rm or} \  1.
	\end{cases}
\end{equation*}
The smallest eigenvalue is $\pi^2$ and the corresponding eigenfunction is
$
	 u(x) = \sin(\pi(x-1)).
$ 		
\end{example}

In this experiment, the variational formula in \eqref{eq:VP3} with $x_0 = 0.5$ and $\gamma=100$ is applied in the deep learning method. To evaluate the test error, we adopt a mesh size $h=\frac{1}{256}$ to generate the uniform triangulation $\mathcal{T}_h$ as the test locations. The test errors during training are summarized in Table \ref{tab:eig} and Figure \ref{fig:eig}.
{  From Table \ref{tab:eig} and Figure \ref{fig:eig}, we know the absolute discrete maximum error of the eigenfunction is reduced to order 1e-2 after { 300} epochs and the eigenvalue error is reduced to order 1e-3 after 5000 epochs. Both errors oscillate after 1000 epochs and it is hard to to get high-accuracy as \#Epoch increases.}

\subsection{Phase II of Int-Deep: Traditional Iterative Methods}

Now we use the approximate solution by deep learning methods as the initial guess in traditional iterative methods. { We will} show that DNNs of size $O(1)$ and trained with $O(100)$ iterations are good enough for traditional iterative methods to converge in at most $O(\log(\frac{1}{\epsilon}))$ iterations to the $\epsilon$ precision of finite element methods.

\subsubsection{Semilinear PDEs}

We consider semilinear elliptic equations with homogeneous Dirichlet boundary conditions to demonstrate the efficiency of the Int-Deep framework in Algorithm~\ref{alg: semilinear} (deep learning combined with Newton's method for semilinear PDE) and to verify Theorem~\ref{thm: semilinear PDE}.

\begin{example}
\label{ex: nonlinear PDE}
Consider the following semilinear elliptic equations
\begin{equation*}
	\begin{cases}
	- \Delta u - (u-1)^3 +(u+2)^2 = f \quad  & \mbox{\rm in} \  \Omega ,
	\\
	\quad u = 0 \quad & \mbox{\rm on} \ \partial \Omega ,
	\end{cases}
\end{equation*}
where  $\Omega = (0,1)^d$ with $d=1$ or $2$. We choose $f$ such that the problem has an exact solution
$ u(x) = 3\sin(2\pi x)$ for $d=1$ and $ u(x) = 3\sin(2\pi x)\sin(2\pi y) $ for $d=2$.
\end{example}

{\bf Case $d=1$.}

First of all, we show that Newton's method cannot converge to a solution without a good initial guess $u_0$ and the deep learning method can provide a good initial guess. We apply Newton's iteration in Algorithm~\ref{alg: semilinear} with several types of initial guesses $u_0$ listed in Table \ref{tab: 1d Newton error}, and the other parameters are taken as $h=\frac{1}{1024}$, $N_{\text{max}} = 15$, and $\epsilon = 0.01 \times h^2$. $u^{DL}$ is obtained via the deep learning approach based on the variational formula   \eqref{eq: solvable optim theta} with $\gamma = 500$ and $200$ epochs in the Adam. Without good knowledge of the ground truth solution or $u^{DL}$, Newton's method fails to converge to a good numerical solution.

\begin{table}[H]
    \small
	\centering
	\caption{The performance of Newton's method with different initial guesses $u_0$.
	$\omega$ stands for a Gaussian random noise with mean zero and unit variance. }
		\begin{tabular}{cccc||cccc}
		\hline
		$u_0$       & $\#$K  &  $\|e_0^h\|_{0,\infty,h}$  & $\|e^h\|_{0,\infty,h}$  & $u_0$  & $\#$K  & $\|e_0^h\|_{0,\infty,h}$  &  $\|e^h\|_{0,\infty,h}$ \\
		\hline
		${1}$    & 5  &  {4.00e+0} &  {1.95e+0}  & $\omega$           & 6  &  {5.68e+0} &  {3.05e+0} \\
		${2}$    & 5  &  {5.00e+0} &  {1.95e+0}  &${1}+\omega$        & 5  &  {6.33e+0} &  {1.95e+0} \\
		${5}$    & 15 &  {8.00e+0} &  {1.28e+5}  &$-{1}+\omega$       & 15 &  {6.95e+0} &  {2.65e+6} \\
		${-1}$   & 15 &  {4.00e+0} &  {9.50e+4}  &$u+\omega$          & 6  &  {4.03e+0} &  {1.82e-5} \\
		${-2}$   & 12 &  {5.00e+0} &  {4.66e+0}  &$u+2.5\times\omega$ & 15 &  {8.89e+0} &  {2.10e+5} \\
		${-5}$   & 15 &  {8.00e+0} &  {6.45e+4}  & $u^{DL}$           & 5  &  {2.97e-1} &  {1.82e-5} \\
		\hline
	\end{tabular}
	\label{tab: 1d Newton error}
\end{table}

Besides, we also test the performance of the Picard's iteration in this example. We apply Picard's iteration with several types of initial guesses $u_0$ listed in Table \ref{tab: 1d Picard error}, and the other parameters are taken as $h=\frac{1}{1024}$,  $N_{\text{max}} = 15$, and $\epsilon = 0.01 \times h^2$. Picard's iteration fails to converge to the right solution in all tests.

\begin{table}[H]
    \small
	\centering
	\caption{The performance of Picard's method with different initial guesses $u_0$.
	$\omega$ stands for a Gaussian random noise with mean zero and unit variance. }
		\begin{tabular}{cccc||cccc}
		\hline
		$u_0$       & $\#$K  & $\|e_0^h\|_{0,\infty,h}$  &  $\|e^h\|_{0,\infty,h}$    & $u_0$  & $\#$K  &  $\|e_0^h\|_{0,\infty,h}$  &  $\|e^h\|_{0,\infty,h}$   \\
		\hline
		$\bm{1}$    & 15  &  {4.00e+0} &  {1.95e+0}  & $\omega$        & 15  &  {5.92e+0}  &  {1.95e+0} \\
		$\bm{0}$    & 15  &  {3.00e+0} &  {1.95e+0}  & $u+\omega$      & 10  &  {3.59e+0}  & nan \\
		\hline
	\end{tabular}
	\label{tab: 1d Picard error}
\end{table}

Furthermore, we show that the two-grid method also needs a good initial guess $u_0$ for Newton's method in the coarse grid stage; otherwise, the two-grid method cannot converge well. Denote the mesh size of the coarse grid as $H$ and the mesh size of the fine grid as $h$. The numerical results in Table \ref{tab: 1d semi two-grid} summarize the performance of the two-grid method with several types of initial guesses $u_0$ and different mesh sizes $H$ and $h$. The two grid method can converge to the correct solution only in the case when the initial guess of the coarse grid stage is provided by the deep learning method.

\begin{table}[H]
    \small
	\centering
	\caption{The performance of the two-grid method for Example \ref{ex: nonlinear PDE} in 1D with different initial guesses and mesh sizes.}
    \begin{tabular}{c|ccc|ccc|ccc}
		\hline
		{$u_0$}  & $H$ & $h$ & $\| e^h \|_{0,\infty,h}$  & $H$ & $h$ & $\| e^h \|_{0,\infty,h}$ & $H$ & $h$ & $\| e^h \|_{0,\infty,h}$ \\		
		\hline
		1 & $2^{-4}$ & $2^{-8}$ &  {1.95e+0} & $2^{-5}$ & $2^{-10}$ &  {1.95e+0} & $2^{-6}$ &  $2^{-12}$ &  {1.95e+0} \\
		-1 & $2^{-4}$ & $2^{-8}$ &  {2.25e+1} & $2^{-5}$ & $2^{-10}$ &  {2.74e+2} & $2^{-6}$ & $2^{-12}$ &   {2.99e+2} \\	
		0 & $2^{-4}$ & $2^{-8}$ &  {1.95e+0} & $2^{-5}$ & $2^{-10}$ &  {1.95e+0} & $2^{-6}$ & $2^{-12}$ &   {1.95e+0} \\
		$\omega$ & $2^{-4}$ & $2^{-8}$ &  {3.06e+0} & $2^{-5}$ & $2^{-10}$ &  {3.05e+0} & $2^{-6}$ & $2^{-12}$ &  {3.05e+0} \\
		$u^{DL}$ & $2^{-4}$ & $2^{-8}$ &  {3.27e-3} & $2^{-5}$ & $2^{-10}$ &  {1.98e-4} & $2^{-6}$ & $2^{-12}$ &  {1.23e-5}\\		
		\hline
	\end{tabular}
 	\label{tab: 1d semi two-grid}
\end{table}

Next, we show that the initial guess by the deep learning approach enables Newton's method to converge to a solution with the precision of finite element methods, i.e., the numerical convergence order in terms of $h$ defined by
{ 
\[
\text{order} := \log_2 \frac{\|e_k^h\|}{\|e_k^{h/2}\|} 
\]
is $2$ as proved by Theorem \ref{thm: semilinear PDE}, where $\|\cdot\|=\|\cdot \|_{0,\infty,h}$, the same one as used in the above table. This convention applies to other numerical examples.}
For the purpose of convenience, we choose $h = 2^{-\ell}$ for different integers $\ell$'s and let $e_{h^2} = h^2$ as the theoretical precision of finite element methods. We repeatedly apply the same deep learning method as in Table \ref{tab: 1d Newton error} to generate different initial guesses $u_0^h$ for different sizes $h$. Let $N_{\text{max}} = 15$ and $\epsilon = 0.01 \times h^2$ in Algorithm \ref{alg: semilinear}. Table \ref{tab: 1d scaling error}  summarizes the performance of Algorithm \ref{alg: semilinear} and numerical results verify the accuracy and the convergence order of the Int-Deep framework, even though the number of epochs in the training of deep learning is $O(100)$ and the initial error is very large.

\begin{table}[H]
    \small
	\centering
	\caption{The performance of Int-Deep in Algorithm~\ref{alg: semilinear} for Example \ref{ex: nonlinear PDE} in 1D with different mesh sizes $h$. }
	\begin{tabular}{ccccccc}
		\hline
		$h$          & $e_{h^2}$ & $\#$ Epoch &$\|e^{DL}\|_{0,\infty,h}$  & $\#$K &  $\|e^h\|_{0,\infty,h}$ & order \\
		\hline
		$2^{-7}$     &  {6.10e-5} & 200 &  {2.97e-1} & 5 &  {1.17e-3}  & - \\
		$2^{-8}$     &  {1.53e-5} & 200 &  {2.97e-1} & 5 &  {2.92e-4}  &  {2.00} \\
		$2^{-9}$     &  {3.81e-6} & 200 &  {2.97e-1} & 5 &  {7.29e-5}  &  {2.00} \\
		$2^{-10}$    &  {9.54e-7} & 200 &  {2.97e-1} & 5 &  {1.82e-5}  &  {2.00} \\
		$2^{-11}$    &  {2.38e-7} & 200 &  {2.97e-1} & 5 &  {4.56e-6}  &  {2.00} \\
		$2^{-12}$    &  {5.96e-8} & 200 &  {2.97e-1} & 5 &  {1.14e-6}  &  {2.00} \\
		$2^{-13}$    &  {1.49e-8} & 200 &  {2.97e-1} & 5 &  {2.85e-7}  &  {2.00} \\
		$2^{-14}$    &  {3.73e-9} & 200 &  {2.97e-1} & 5 &  {7.12e-8}  &  {2.00} \\
		\hline
	\end{tabular}
	\label{tab: 1d scaling error}
\end{table}

Finally, we show that the performance of the Int-Deep in Algorithm \ref{alg: semilinear} is independent of the size of DNNs, which is supported by the numerical results in Table \ref{tab: 1d differnet NN }.

\begin{table}[H]
    \small
  \centering
  \caption{The performance of Int-Deep in Algorithm~\ref{alg: semilinear} for Example \ref{ex: nonlinear PDE} in 1D with different DNN width $N$ and depth $L$ when $h=\frac{1}{1024}$ and $\#$Epoch = 400.}
    \begin{tabular}{c|ccc|ccc|ccc}
    \hline
    \multirow{2}{*}{\diagbox{L}{N}}  &\multicolumn{3}{c|}{10} &\multicolumn{3}{c|}{30} &\multicolumn{3}{c}{50}\\
    \cline{2-10}
    & ${\| e^{DL}\|_{0,\infty,h}}$  & $\|e^h\|_{0,\infty,h}$  & $\#$K  & ${\| e^{DL}\|_{0,\infty,h}}$ &   $\|e^h\|_{0,\infty,h}$  & $\#$K & ${\| e^{DL}\|_{0,\infty,h}}$  & $\|e^h\|_{0,\infty,h}$  & $\#$K   \\
    \hline
    2 &  {3.72e+0} &  {1.82e-5} & 8 &  {2.27e-1} &  {1.82e-5} & 5 &  {5.19e-2} &  {1.82e-5}& 4  \\
    4 &  {1.05e-1} &  {1.82e-5} & 4 &  {1.63e-2} &  {1.82e-5} & 3 &  {1.51e-2} &  {1.82e-5} & 3  \\
    6 &  {9.84e-2} &  {1.82e-5} & 4 &  {8.48e-3} &  {1.82e-5} & 3 &  {5.34e-3} &  {1.82e-5} & 3  \\
    \hline
  \end{tabular}
  \label{tab: 1d differnet NN }
\end{table}

{\bf Case $d=2$.}

Again, we show that the initial guess by the deep learning approach enables Newton's method to converge to a solution with the precision of finite element methods, i.e., the numerical convergence order in terms of $h$ is almost $2$ as proved by Theorem \ref{thm: semilinear PDE}. We repeatedly apply the variational formula in  \eqref{eq: solvable optim theta} with $\gamma = 500$ and the deep learning method to generate different initial guesses $u_0^h$ for different sizes $h$. Let $N_{\text{max}} = 15$ and $\epsilon = 0.01 \times h^2$ in Algorithm \ref{alg: semilinear}. {  Let $e_{h^2} = |h^2\log h|$.} Table \ref{tab: 2d scaling error}   summarizes the performance of Algorithm \ref{alg: semilinear} and numerical results verify the accuracy and the convergence order of the Int-Deep framework, even though the number of epochs in deep learning is $O(100)$ and the initial error is very large.

\begin{table}[H]
	\centering
	\caption{ The performance of Int-Deep in Algorithm~\ref{alg: semilinear} for Example \ref{ex: nonlinear PDE} in 2D with different mesh sizes $h$. }
    \begin{tabular}{ccccccc}
		\hline
		$h$          & $e_{h^2}$ & $\#$ Epoch &  $\| e^{DL}\|_{0,\infty,h}$  & $\#$K & $\| e^h\|_{0,\infty,h}$  & order \\
		\hline
		$2^{-4}$    &  {1.08e-2} & 200 &  {2.19e+0} & 5 &  {2.03e-1} & - \\
		$2^{-5}$    &  {3.38e-3} & 200 &  {2.19e+0} & 5 &  {5.92e-2} &  {1.78} \\
		$2^{-6}$    &  {1.02e-3} & 200 &  {2.20e+0} & 5 &  {1.55e-2} &  {1.94} \\
		$2^{-7}$    &  {2.96e-4} & 200 &  {2.20e+0} & 6 &  {3.92e-3} &  {1.98} \\
		$2^{-8}$    &  {8.46e-5} & 200 &  {2.20e+0} & 6 &  {9.83e-4} &  {2.00} \\
		\hline
	\end{tabular}
	\label{tab: 2d scaling error}
\end{table}

Finally, we show that the performance of the Int-Deep in Algorithm \ref{alg: semilinear} is independent of the size of DNNs, which is supported by the numerical results in Table \ref{tab: 2d differnet NN}.

\begin{table}[H]
    \small
  \centering
  \caption{The performance of Int-Deep in Algorithm~\ref{alg: semilinear} for Example \ref{ex: nonlinear PDE} in 2D with different DNN width $N$ and depth $L$ when $h=\frac{1}{128}$ and $\#$Epoch = 400. }
  \begin{tabular}{c|ccc|ccc|ccc}
    \hline
    \multirow{2}{*}{\diagbox{L}{N}} &\multicolumn{3}{c|}{10} &\multicolumn{3}{c|}{30} &\multicolumn{3}{c}{50}\\
    \cline{2-10}
    &$ {\| e^{DL}\|_{0,\infty,h}}$  &  $\|e^h\|_{0,\infty,h}$  & $\#$K  & ${\| e^{DL}\|_{0,\infty,h}}$  &   $\|e^h\|_{0,\infty,h}$  & $\#$K & ${\| e^{DL}\|_{0,\infty,h}}$  &   $\|e^h\|_{0,\infty,h}$  & $\#$K \\
    \hline
    2 &  {3.15e+0} &  {3.92e-3} & 6  &  {2.91e+0} &  {3.92e-3} & 6  &  {3.32e+0} &  {3.92e-3} & 6  \\
    4 &  {4.06e+0} &  {3.92e-3} & 6  &  {2.53e+0} &  {3.92e-3} & 5  &  {7.59e-1} &  {3.92e-3} & 4  \\
    6 &  {3.54e+0} &  {3.92e-3} & 6  &  {1.20e+0} &  {3.92e-3} & 4  &  {1.05e+0} &  {3.92e-3} & 4  \\
    \hline
  \end{tabular}
  \label{tab: 2d differnet NN}
\end{table}

\subsubsection{Linear eigenvalue problems}

Here, we demonstrate the efficiency of the Int-Deep framework in Algorithm~\ref{alg: eigen} (deep learning combined with the power method) and verify Theorem~\ref{thm: eigenvalue problem}.

\begin{example}
\label{ex: 2d eigen problem}
Consider the following problem
\begin{equation*}
	\begin{cases}
		- \Delta u = \lambda u \quad   &\mbox{\rm in}  \ \Omega,
		\\
		\quad u = 0 \quad &\mbox{\rm on} \ \partial \Omega,
	\end{cases}
\end{equation*}
where $\Omega = (0,1)^d$ for $d=1$ and $2$. The smallest eigenvalue is $d\pi^2$. The corresponding eigenfunction is $u(x) = \sin(\pi(x-1))$ for $d=1$, and $u(x,y) = \sin(\pi(x-1))\sin(\pi(y-1))$ for $d=2$.
\end{example}

{\bf Case $d=1$.}

We show that the initial guess by the deep learning approach enables the power method to quickly converge to a solution with the precision of finite element methods, i.e., the numerical convergence order in terms of $h$ is
$2$ as proved by Theorem \ref{thm: eigenvalue problem}.
We repeatedly apply the variational formula in \eqref{eq:VP3} (with $x_0 = 0.5$ and $\gamma = 100$) and the deep learning method to generate different initial guesses $u_0^h$ for different sizes $h$. Let $N_{\text{max}} = 10$ and $\epsilon = h^2$ in Algorithm \ref{alg: eigen}. Table \ref{tab: 1d eigen error order}  summarizes the performance of Algorithm \ref{alg: eigen} and numerical results verify the accuracy and the convergence order of the Int-Deep framework, even though the number of epochs in deep learning is $O(100)$.

\begin{table}[H]
	\centering
	\caption{The performance of Int-Deep in Algorithm~\ref{alg: eigen} for Example \ref{ex: 2d eigen problem} in 1D with different mesh sizes $h$.}
	\begin{tabular}{cccccccc}
		\hline
		$h$ & $\#$Epoch & { $\| e^{h}_0\|_{0}$} & $\#$K & $|\lambda - \lambda^h|$ & eigenvalue order & { $\| e^h\|_{0}$}  & eigenfunction order\\
		\hline
		$2^{-5}$ & 300 &  {1.13e-2}  & 3  &  {7.93e-3}  & -     &  {8.08e-4}  &  -      \\
		$2^{-6}$ & 300 &  {1.14e-2}  & 4  &  {1.98e-3}  &  {2.00}   &  {2.02e-4}  &  {2.00}  \\
		$2^{-7}$ & 300 &  {1.14e-2}  & 5  &  {4.95e-4}  &  {2.00}   &  {5.05e-5}  &  {2.00}  \\
		$2^{-8}$ & 300 &  {1.14e-2}  & 6  &  {1.24e-4}  &  {2.00}   &  {1.26e-5}  &  {2.00}  \\
		$2^{-9}$ & 300 &  {1.14e-2}  & 7  &  {3.04e-5}  &  {2.03}   &  {3.17e-6}  &  {2.00}  \\
		\hline
	\end{tabular}
	\label{tab: 1d eigen error order}
\end{table}

Next, we show that the performance of Int-Deep in Algorithm \ref{alg: eigen} is independent of the size of DNNs, which is supported by the numerical results in Table \ref{tab: 1d differnet NN eigenvalue}.

\begin{table}[H]
    \small
  \centering
  \caption{The performance of Int-Deep in Algorithm~\ref{alg: eigen} for Example \ref{ex: 2d eigen problem} in 1D with different DNN width $N$ and depth $L$ when $h=\frac{1}{512}$ and $\#$Epoch = 400. }
  \begin{tabular}{c|ccc|ccc|ccc}
    \hline
    \multirow{2}{*}{\diagbox{L}{N}} &\multicolumn{3}{c|}{10} &\multicolumn{3}{c|}{30} &\multicolumn{3}{c}{50}\\
    \cline{2-10}
    & $|\lambda-\lambda^h|$ &  $\| e^{h}\|_{0}$  & $\#$K & $|\lambda-\lambda^h|$ & $\| e^{h}\|_{0}$  & $\#$K & $|\lambda-\lambda^h|$ &  $\| e^{h}\|_{0}$  & $\#$K  \\
    \hline
    2 &  {3.06e-5} &  {3.30e-6} & 7 &  {2.61e-5} &  {3.40e-6} & 8 &  {3.67e-5} &  {3.47e-6} & 8 \\
    4 &  {3.10e-5} &  {3.26e-6} & 6 &  {3.09e-5} &  {3.17e-6} & 6 &  {3.24e-5} &  {3.19e-6} & 8 \\
    6 &  {4.25e-5} &  {4.42e-6} & 8 &  {2.98e-5} &  {3.24e-6} & 7 &  {3.09e-5} &  {3.30e-6} & 6 \\
    \hline
  \end{tabular}
  \label{tab: 1d differnet NN eigenvalue}
\end{table}

{\bf Case $d=2$.}

Again, we show that the initial guess by the deep learning approach enables the power method to quickly converge to a solution with the precision of finite element methods, i.e., the numerical convergence order in terms of $h$ is
$2$ as proved by Theorem \ref{thm: eigenvalue problem}.
We repeatedly apply the variational formula in \eqref{eq:VP3} (with $\bm{x}_0 = (0.5,0.5)$ and $\gamma = 100$) and the deep learning method to generate different initial guesses $u_0^h$ for different sizes $h$. Let $N_{\text{max}} = 10$ and $\epsilon = h^2$ in Algorithm \ref{alg: eigen}. Table \ref{tab: eigen error order}   summarizes the performance of Algorithm \ref{alg: eigen} and numerical results verify the accuracy and the convergence order of the Int-Deep framework, even though the number of epochs in deep learning is $O(100)$.

\begin{table}[H]
	\centering
	\caption{The performance of Int-Deep in Algorithm~\ref{alg: eigen} for Example \ref{ex: 2d eigen problem} in 2D with different mesh sizes $h$.   }
	\begin{tabular}{cccccccc}
		\hline
		$h$ & \#Epoch & { $\| e^{h}_0\|_{0}$} & \#K & $|\lambda - \lambda^h|$ & eigenvalue order & $\| e^{h}\|_{0}$ & eigenfunction order\\
		\hline
		$2^{-4}$  & 300 &  {5.56e-2} & 4  &  {1.91e-1}  & -     &  {6.63e-3}  &  -   \\
		$2^{-5}$  & 300 &  {5.65e-2} & 5  &  {4.76e-2}   &  {2.00}   &  {1.70e-3}  &  {1.96}  \\
		$2^{-6}$  & 300 &  {5.68e-2} & 7  &  {1.19e-2}  &  {2.00}   &  {4.19e-4}  &  {2.02}  \\
		$2^{-7}$  & 300 &  {5.69e-2} & 8  &  {2.97e-3}  &  {2.00}   &  {1.07e-4}  &  {1.97}  \\
		$2^{-8}$  & 300 &  {5.69e-2} & 10 &  {7.43e-4}  &  {2.00}   &  {2.62e-5}  &  {2.03}  \\
		\hline
	\end{tabular}
	\label{tab: eigen error order}
\end{table}

Next, we show that the performance of Int-Deep in Algorithm \ref{alg: eigen} is independent of the size of DNNs, which is supported by the numerical results in Table \ref{tab: 2d differnet NN eigenvalue}.

\begin{table}[H]
    \small
  \centering
  \caption{The performance of Int-Deep in Algorithm~\ref{alg: eigen} for Example \ref{ex: 2d eigen problem} in 2D with different DNN width $N$ and depth $L$ when $h=\frac{1}{128}$ and $\#$Epoch = 400.  }
  \begin{tabular}{c|ccc|ccc|ccc}
    \hline
    \multirow{2}{*}{\diagbox{L}{N}} &\multicolumn{3}{c|}{10} &\multicolumn{3}{c|}{30} &\multicolumn{3}{c}{50}\\
    \cline{2-10}
    & $|\lambda-\lambda^h|$ &  $\| e^{h}\|_{0}$ & $\#$K  & $|\lambda-\lambda^h|$ & $\| e^{h}\|_{0}$ & $\#$K  & $|\lambda-\lambda^h|$ & $\| e^{h}\|_{0}$ & $\#$K   \\
    \hline
    2 &  {2.97e-3} &  {1.09e-4} & 9  &  {2.97e-3} &  {1.08e-4} & 8  &  {2.97e-3} &  {1.08e-4} & 8  \\
    4 &  {2.97e-3} &  {1.07e-4} & 7  &  {2.97e-3} &  {1.07e-4} & 7  &  {2.97e-3} &  {1.09e-4} & 7  \\
    6 &  {2.97e-3} &  {1.07e-4} & 6  &  {2.97e-3} &  {1.11e-4} & 7  &  {2.97e-3} &  {1.05e-4} & 8  \\
    \hline
  \end{tabular}
  \label{tab: 2d differnet NN eigenvalue}
\end{table}

\subsubsection{Nonlinear eigenvalue problems}
As used in \cite{GaoYangMeza2009}, denote by $res^{DL}$ and $res^h$ the residuals of $u^{DL}$ and $u^h$ corresponding to the matrix form of \eqref{nonlinear eig variational2}, respectively. {  In this example, we denote $\| \cdot \|_2$ as the vector $L_2-$norm in Euclidian space.} Then we investigate the accuracy of { Int-Deep} framework in Algorithm~\ref{alg: nonlinear eig} through the case of the nonlinear Schr\"{o}dinger equation.

\begin{example}\label{ex:nonlinear eigen}
Consider the following problem	
\begin{equation*}\label{ex: GP}
	\begin{cases}
		- \Delta u + V(x)u + 10 u^3= \lambda u &\quad \mbox{\rm in} \ \Omega,
		\\
		u =0 &\quad \mbox{\rm on} \ \partial\Omega,
		\\
		\| u \|_0 =1,
	\end{cases}
\end{equation*}
where $\Omega = (0,1)^d$ for $d=1,2$, $V(x)$ is a potential with two Gaussian wells on $(0,1)$ for $d=1$,  and with four Gaussian wells on $(0,1)^2$ for $d=2$.
\end{example}

\textbf{Case $d=1$}

 We first show that the initial guess by the deep learning  approach enables Newton's method to quickly converge to the target solution. We repeatedly apply the variational formula in \eqref{GP variational 3} (with $x_0 = 0.5$ and $\gamma = 100$) and the deep learning method to generate different initial guesses $u_0^h$ for different sizes $h$. Let $N_{\text{max}} = 10$ and $\epsilon = h^2$ in Algorithm \ref{alg: nonlinear eig}. Table \ref{tab: 1d nonlinear eigen error}  summarizes the performance of Algorithm \ref{alg: nonlinear eig}.

\begin{table}[H]
	\centering
	\caption{The performance of Int-Deep in Algorithm~\ref{alg: nonlinear eigen} for Example \ref{ex:nonlinear eigen} in 1D with different mesh sizes $h$.}
	\begin{tabular}{cccccccc}
		\hline
		$h$ & $\#$Epoch & ${\|res^{DL}\|_2}$ & $\lambda^{DL}$ & $\#$K & ${\|res^h\|_2}$ & $\lambda^h$ \\
		\hline
		$2^{-5}$ & 300 &  {6.10e-1} &  {23.71} & 2 &  {2.41e-6}  &  {23.60} \\
		$2^{-6}$ & 300 &  {4.72e-1} &  {23.69} & 3 &  {1.13e-10} &  {23.58} \\
		$2^{-7}$ & 300 &  {3.46e-1} &  {23.69} & 3 &  {1.54e-11} &  {23.58} \\
		$2^{-8}$ & 300 &  {2.49e-1} &  {23.69} & 3 &  {2.57e-12} &  {23.58} \\
		$2^{-9}$ & 300 &  {1.77e-1} &  {23.69} & 3 &  {1.51e-12} &  {23.58} \\
		\hline
	\end{tabular}
	\label{tab: 1d nonlinear eigen error}
\end{table}

Next, we show that the performance of Int-Deep in Algorithm \ref{alg: nonlinear eigen} is independent of the size of DNNs, which is supported by the numerical results in Table \ref{tab: 1d differnet NN performance GP}.

\begin{table}[H]
  \small
  \centering
  \caption{ The performance of Int-Deep in Algorithm~\ref{alg: nonlinear eig} for Example \ref{ex:nonlinear eigen} in 1D with different DNN width $N$ and depth $L$ when $h=\frac{1}{512}$ and $\#$Epoch = 400. }
  \begin{tabular}{c|ccc|ccc|ccc}
    \hline
    \multirow{2}{*}{\diagbox{L}{N}} &\multicolumn{3}{c|}{10} &\multicolumn{3}{c|}{30} &\multicolumn{3}{c}{50}\\
    \cline{2-10}
    & $\lambda^h$ & ${\|res^h\|_2}$ & $\#$K  & $\lambda^h$& ${\|res^h\|_2}$ & $\#$K & $\lambda^h$& ${\|res^h\|_2}$ & $\#$K    \\
    \hline
    2 &  {23.58} &  {1.33e-12} & 5  &  {23.58} &  {1.44e-12} & 4 & {23.58} &  {1.50e-12} & 3  \\
    4 &  {23.58} &  {1.53e-12} & 3  &  {23.58} &  {2.14e-12} & 3 & {23.58} &  {1.48e-12} & 3  \\
    6 &  {23.58} &  {1.44e-12} & 3  &  {23.58} &  {1.49e-12} & 3 & {23.58} &  {1.34e-12} & 3  \\
    \hline
  \end{tabular}
  \label{tab: 1d differnet NN performance GP}
\end{table}

{\bf Case $d=2$.}

Again, we show that the initial guess by the deep learning approach enables Newton's method to quickly converge to the target solution. We repeatedly apply the variational formula in \eqref{GP variational 3} (with $\bm{x}_0 = (0.5,0.5)$ and $\gamma = 100$) and the deep learning method to generate different initial guesses $u_0^h$ for different sizes $h$. Let $N_{\text{max}} = 10$ and $\epsilon = h^2$ in Algorithm \ref{alg: nonlinear eig}. Table \ref{tab: 2d nonlinear eigen error}   summarizes the performance of Algorithm \ref{alg: nonlinear eig}.

\begin{table}[H]
	\centering
	\caption{The performance of Int-Deep in Algorithm~\ref{alg: nonlinear eigen} for Example \ref{ex:nonlinear eigen} in 2D with different mesh sizes $h$.}
	\begin{tabular}{cccccccc}
		\hline
		$h$ & $\#${Epoch} & ${\|res^{DL}\|_2}$ & $\lambda^{DL}$ & $\#$K & ${\|res^h\|_2}$ & $\lambda^h$ \\
		\hline
		$2^{-4}$ & 300 &  {4.07e-1} &  {39.74} & 2 &  {9.06e-6}  &  {39.46} \\
		$2^{-5}$ & 300 &  {2.46e-1} &  {39.49} & 2 &  {2.13e-6}  &  {39.19} \\
		$2^{-6}$ & 300 &  {1.35e-1} &  {39.43} & 3 &  {1.43e-10}  &  {39.12} \\
		$2^{-7}$ & 300 &  {7.04e-2} &  {39.42} & 3 &  {1.53e-11} &  {39.10} \\
		\hline
	\end{tabular}
	\label{tab: 2d nonlinear eigen error}
\end{table}

Next, we show that the performance of Int-Deep in Algorithm \ref{alg: nonlinear eigen} is independent of the size of DNNs, which is supported by the numerical results in Table \ref{tab: 2d differnet NN performance GP}.

\begin{table}[H]
  \small
  \centering
  \caption{ The performance of Int-Deep in Algorithm~\ref{alg: nonlinear eig} for Example \ref{ex:nonlinear eigen} in 2D with different DNN width $N$ and depth $L$ when $h=\frac{1}{128}$ and $\#$Epoch = 400. }
  \begin{tabular}{c|ccc|ccc|ccc}
    \hline
    \multirow{2}{*}{\diagbox{L}{N}} &\multicolumn{3}{c|}{10} &\multicolumn{3}{c|}{30} &\multicolumn{3}{c}{50}\\
    \cline{2-10}
    & $\lambda^h$ & ${\|res^h\|_2}$ & $\#$K  & $\lambda^h$& ${\|res^h\|_2}$ & $\#$K & $\lambda^h$& ${\|res^h\|_2}$ & $\#$K   \\
    \hline
    2 &  {39.10} &  {1.60e-12} & 3  &  {39.10} &  {7.88e-13} & 4  &  {39.10} &  {4.26e-13} & 3  \\
    4 &  {39.10} &  {3.28e-10} & 3  &  {39.10} &  {1.30e-11} & 3  &  {39.10} &  {6.33e-12} & 3  \\
    6 &  {39.10} &  {3.77e-11} & 3  &  {39.10} &  {8.30e-11} & 3  &  {39.10} &  {2.10e-10} & 3  \\
    \hline
  \end{tabular}
  \label{tab: 2d differnet NN performance GP}
\end{table}

\section{Conclusion}\label{sec:con}

This paper proposed the Int-Deep framework from a new point of view for designing highly efficient solvers of low-dimensional nonlinear PDEs with a finite element accuracy leveraging both the advantages of traditional algorithms and deep learning approaches.  The Int-Deep framework consists of two phases. In the first phase, an approximate solution to the given nonlinear PDE is obtained via deep learning approaches using DNNs of size $O(1)$ and $O(100)$ iterations. In the second phase, the approximate solution provided by deep learning can serve as a good initial guess such that traditional iterative methods converge in $O(\log(\frac{1}{\epsilon}))$ iterations to the $\epsilon$ precision of finite element methods. The Int-Deep framework outperforms existing purely deep learning-based methods or traditional iterative methods. The code can be shared per request.

In particular, based on variational principles, we propose new methods to formulate the problem of solving nonlinear PDEs into an unconstrained minimization problem of an expectation over a function space parametrized via DNNs, which can be solved efficiently via batch stochastic gradient descent (SGD) methods due to the special form of expectation. Unlike previous methods in which the form of expectation is only derived for nonlinear PDEs related to variational equations, our proposed method can also handle variational inequalities and eigenvalue problems, providing a unified variational framework for a wider range of nonlinear problems. With the good initialization given by deep learning, we hope to reduce the difficulty of designing an efficient traditional iterative algorithm for variational inequalities. We will leave this as future work.

{\bf Acknowledgments.} J. H. was partially supported by NSFC (Grant No.11571237). H. Y. was partially supported by the National Science Foundation under the grant award 1945029. We would like to thank reviewers for their valuable comments to improve the manuscript.

\bibliographystyle{plain}

\begin{thebibliography}{}

\end{thebibliography}


\begin{thebibliography}{100}

\bibitem{Adams1975}
R.A. Adams.
\newblock {\em Sobolev Spaces}.
\newblock Academic Press, New York-London, 1975.

\bibitem{AmbrosettiMalchiodi2007}
A.~Ambrosetti and A.~Malchiodi.
\newblock {\em Nonlinear Analysis and Semilinear Elliptic Problems}.
\newblock Cambridge University Press, Cambridge, 2007.

\bibitem{nature}
J.R. Anglin and W.~Ketterle.
\newblock Bose -{E}instein {C}ondensation of {A}tomic {G}ases.
\newblock {\em Nature}, 416:211--218, 2002.

\bibitem{BabuskaOsborn1991}
I.~Babu\v{s}ka and J.~Osborn.
\newblock Eigenvalue {P}roblems.
\newblock In {\em Handbook of Numerical Analysis}. North-Holland, Amsterdam,
  1991.

\bibitem{BabuskaOsborn1989}
I.~Babu\v{s}ka and J.~E. Osborn.
\newblock Finite {E}lement-{G}alerkin {A}pproximation of the {E}igenvalues and
  {E}igenvectors of {S}elfadjoint {P}roblems.
\newblock {\em Math. Comp.}, 52:275--297, 1989.

\bibitem{barron1993}
A.R. Barron.
\newblock Universal {A}pproximation {B}ounds for {S}uperpositions of a
  {S}igmoidal {F}unction.
\newblock {\em IEEE Trans. Inform. Theory}, 39:930--945, 1993.

\bibitem{Bartels}
S.~Bartels.
\newblock {\em Numerical Methods for Nonlinear Partial Differential Equations}.
\newblock Springer, Cham, 2015.

\bibitem{BASDEVANT198623}
C.~Basdevant, M.~Deville, P.~Haldenwang, J.M. Lacroix, J.~Ouazzani, R.~Peyret,
  P.~Orlandi, and A.T Patera.
\newblock {S}pectral and {F}inite {D}ifference {S}olutions of the {B}urgers
  {E}quation.
\newblock {\em Comput. {$\&$} Fluids}, 14:23 -- 41, 1986.

\bibitem{BERG2018}
J.~Berg and K.~Nystr{\"o}m.
\newblock {A} {U}nified {D}eep {A}rtificial {N}eural {N}etwork {A}pproach to
  {P}artial {D}ifferential {E}quations in {C}omplex {G}eometries.
\newblock {\em Neurocomputing}, 317:28 -- 41, 2018.

\bibitem{BochevGunzburger2009}
P.~B. Bochev and M.D. Gunzburger.
\newblock {\em Least-squares Finite Element Methods}.
\newblock Springer, New York, 2009.

\bibitem{BrennerScott2008}
S.C. Brenner and L.R. Scott.
\newblock {\em The Mathematical Theory of Finite Element Methods}.
\newblock Springer, New York, 1994.

\bibitem{CancesChakirHeMaday}
E.~Canc\`{e}s, R.~Chakir, L.~He, and Y.~Maday.
\newblock {Two-grid {M}ethods for a {C}lass of {N}onlinear {E}lliptic
  {E}igenvalue {P}roblems}.
\newblock {\em IMA J. Numer. Anal.}, 38:605--645, 2018.

\bibitem{Cances2010}
E.~Canc\`{e}s, R.~Chakir, and Y.~Maday.
\newblock Numerical {A}nalysis of {N}onlinear {E}igenvalue {P}roblems.
\newblock {\em J. Sci. Comput.}, 45:90--117, 2010.

\bibitem{Carleo2017}
G.~Carleo and M.~Troyer.
\newblock Solving the {Q}uantum {M}any-body {P}roblem with {A}rtificial
  {N}eural {N}etworks.
\newblock {\em Science}, 355:602--606, 2017.

\bibitem{10.2307/2004575}
A.~J. Chorin.
\newblock Numerical {S}olution of the {N}avier-{S}tokes {E}quations.
\newblock {\em Math. Comp.}, 22:745--762, 1968.

\bibitem{Ciarlet1978}
P.G. Ciarlet.
\newblock {\em The Finite Element Method for Elliptic Problems}.
\newblock North-Holland, Amsterdam-New York-Oxford, 1978.

\bibitem{doi:10.1002/cnm.1640100303}
M.W.M.~G. Dissanayake and N.~Phan-Thien.
\newblock Neural-network-based {A}pproximations for {S}olving {P}artial
  {D}ifferential {E}quations.
\newblock {\em Comm. Numer. Methods Engrg.}, 10:195--201, 1994.

\bibitem{DuvautLions1976}
G.~Duvaut and J.{-L}. Lions.
\newblock {\em Inequalities in Mechanics and Physics}.
\newblock Springer, Berlin-New York, 1976.

\bibitem{EYU2018}
W.~E and B.~Yu.
\newblock The {D}eep {R}itz {M}ethod: a {D}eep {L}earning-based {N}umerical
  {A}lgorithm for {S}olving {V}ariational {P}roblems.
\newblock {\em Commun. Math. Stat.}, 6:1--12, 2018.

\bibitem{Fan2019}
Y.~Fan, J.~Feliu-Fab\`{a}, L.~Lin, L.~Ying, and L.~Zepeda-N\'{u}{\~{n}}ez.
\newblock A {M}ultiscale {N}eural {N}etwork {B}ased on {H}ierarchical {N}ested
  {B}ases.
\newblock {\em Res. Math. Sci.}, 6:21--28, 2019.

\bibitem{Yuwei2018}
Y.~Fan, L.~Lin, L.~Ying, and L.~Zepeda-N\'{u}{\~{n}}ez.
\newblock A {M}ultiscale {N}eural {N}etwork {B}ased on {H}ierarchical
  {M}atrices.
\newblock {\em Multiscale Model. Simul.}, 17:1189--1213, 2019.

\bibitem{Fan2019BCRNetAN}
Y.~Fan, C.~Orozco~Bohorquez, and L.~Ying.
\newblock {BCR}-{N}et: a {N}eural {N}etwork {B}ased on the {N}onstandard
  {W}avelet {F}orm.
\newblock {\em J. Comput. Phys.}, 384:1--15, 2019.

\bibitem{PhysRev1}
A.~L. Fetter.
\newblock Vortices in an {I}mperfect {B}ose {G}as. {I}. {T}he {C}ondensate.
\newblock {\em Phys. Rev. (2)}, 138:A429--A437, 1965.

\bibitem{GaoYangMeza2009}
W.~Gao, C.~Yang, and J.~Meza.
\newblock Solving a {C}lass of {N}onlinear {E}igenvalue {P}roblems by
  {N}ewton's {M}ethod.
\newblock {\em preprint}, 2009.

\bibitem{Glowinski1984}
R.~Glowinski.
\newblock {\em Numerical Methods for Nonlinear Variational Problems}.
\newblock Springer, New York, 1984.

\bibitem{IanYoshuaAaron2016}
I.~Goodfellow, Y.~Bengio, and A.~Courville.
\newblock {\em Deep Learning}.
\newblock MIT Press, Cambridge, 2016.

\bibitem{GuYangZhou2020}
Y.~{Gu}, H.~{Yang}, and C.~{Zhou}.
\newblock {Select{N}et: {S}elf-paced {L}earning for {H}igh-dimensional
  {P}artial {D}ifferential {E}quations}.
\newblock {\em arXiv e-prints}, page arXiv:2001.04860, 2020.

\bibitem{Han8505}
J.~Han, A.~Jentzen, and W.~E.
\newblock Solving {H}igh-dimensional {P}artial {D}ifferential {E}quations
  {U}sing {D}eep {L}earning.
\newblock {\em Proc. Natl. Acad. Sci. USA}, 115:8505--8510, 2018.

\bibitem{HeZhangRenSun2016}
K.~{He}, X.~{Zhang}, S.~{Ren}, and J.~{Sun}.
\newblock Deep {R}esidual {L}earning for {I}mage {R}ecognition.
\newblock In {\em 2016 IEEE Conference on Computer Vision and Pattern
  Recognition (CVPR)}, pages 770--778, 2016.

\bibitem{PhysRev3}
P.~Hohenberg and W.~Kohn.
\newblock Inhomogeneous {E}lectron {G}as.
\newblock {\em Phys. Rev. (2)}, 136:B864--B871, 1964.

\bibitem{HYMAN1986113}
J.M. Hyman and B.~Nicolaenko.
\newblock The {K}uramoto-{S}ivashinsky {E}quation: a {B}ridge between {PDE}'s
  and {D}ynamical {S}ystems.
\newblock {\em Phys. D}, 18:113 -- 126, 1986.

\bibitem{Yuehaw2017}
Y.~Khoo, J.~Lu, and L.~Ying.
\newblock {Solving {P}arametric {PDE} {P}roblems with {A}rtificial {N}eural
  {N}etworks}.
\newblock {\em arXiv e-prints}, page arXiv:1707.03351, 2017.

\bibitem{Khoo2018}
Y.~Khoo, J.~Lu, and L.~Ying.
\newblock Solving for {H}igh-dimensional {C}ommittor {F}unctions {U}sing
  {A}rtificial {N}eural {N}etworks.
\newblock {\em Res. Math. Sci.}, 6:1--13, 2019.

\bibitem{Yuehaw20182}
Y.~Khoo and L.~Ying.
\newblock Switch{N}et: a {N}eural {N}etwork {M}odel for {F}orward and {I}nverse
  {S}cattering {P}roblems.
\newblock {\em SIAM J. Sci. Comput.}, 41:A3182--A3201, 2019.

\bibitem{KB2014}
D.P. Kingma and J.~Ba.
\newblock {Adam: a {M}ethod for {S}tochastic {O}ptimization}.
\newblock {\em arXiv e-prints}, page arXiv:1412.6980, 2014.

\bibitem{PhysRev2}
W.~Kohn and L.~J. Sham.
\newblock Self-consistent {E}quations {I}ncluding {E}xchange and {C}orrelation
  effects.
\newblock {\em Phys. Rev.(2)}, 140:A1133--A1138, 1965.

\bibitem{kurkova1992}
V.~K\r{u}rkov\'{a}.
\newblock Kolmogorov's {T}heorem and {M}ultilayer {N}eural {N}etworks.
\newblock {\em Neural Networks}, 5:501--506, 1992.

\bibitem{712178}
I.E. Lagaris, A.~Likas, and D.I. Fotiadis.
\newblock Artificial {N}eural {N}etworks for {S}olving {O}rdinary and {P}artial
  {D}ifferential {E}quations.
\newblock {\em IEEE Trans. Neural Networks}, 9:987--1000, 1998.

\bibitem{LuShenYangZhang2020}
J.~{Lu}, Z.~{Shen}, H.~{Yang}, and S.~{Zhang}.
\newblock {Deep {N}etwork {A}pproximation for {S}mooth {F}unctions}.
\newblock {\em arXiv e-prints}, page arXiv:2001.03040, 2020.

\bibitem{5061501}
K.S. McFall and J.R. Mahan.
\newblock Artificial {N}eural {N}etwork {M}ethod for {S}olution of {B}oundary
  {V}alue {P}roblems with {E}xact {S}atisfaction of {A}rbitrary {B}oundary
  {C}onditions.
\newblock {\em IEEE Trans. Neural Networks}, 20:1221--1233, 2009.

\bibitem{MEADE199419}
A.J. Meade Jr. and A.A. Fern\'{a}ndez.
\newblock Solution of {N}onlinear {O}rdinary {D}ifferential {E}quations by
  {F}eedforward {N}eural {N}etworks.
\newblock {\em Math. Comput. Modelling}, 20:19 -- 44, 1994.

\bibitem{Miura}
R.M. Miura.
\newblock The {K}orteweg-de{V}ries {E}quation: a {S}urvey of {R}esults.
\newblock {\em SIAM Rev.}, 18:412--459, 1976.

\bibitem{montanelli2019a}
H.~Montanelli and Q.~Du.
\newblock New {E}rror {B}ounds for {D}eep {R}e{LU} {N}etworks {U}sing {S}parse
  {G}rids.
\newblock {\em SIAM J. Math. Data Sci.}, 1:78--92, 2019.

\bibitem{Yang20192}
H.~Montanelli and H.~Yang.
\newblock Error {B}ounds for {D}eep {R}e{LU} {N}etworks {U}sing the
  {K}olmogorov--{A}rnold {S}uperposition {T}heorem.
\newblock {\em arXiv e-prints}, page arXiv:1906.11945, 2019.

\bibitem{montanelli2019b}
H.~Montanelli, H.~Yang, and Q.~Du.
\newblock Deep {R}e{LU} {N}etworks {O}vercome the {C}urse of {D}imensionality
  for {B}andlimited {F}unctions.
\newblock {\em arXiv e-prints}, page arXiv:1903.00735, 2019.

\bibitem{QuarteroniValli1994}
A.~Quarteroni and A.~Valli.
\newblock {\em Numerical Approximation of Partial Differential Equations}.
\newblock Springer, Berlin, 1994.

\bibitem{RaissiPerdikarisKarniadakis2019}
M.~Raissi, P.~Perdikaris, and G.E. Karniadakis.
\newblock Physics-informed {N}eural {N}etworks: a {D}eep {L}earning {F}ramework
  for {S}olving {F}orward and {I}nverse {P}roblems {I}nvolving {N}onlinear
  {P}artial {D}ifferential {E}quations.
\newblock {\em J. Comput. Phys.}, 378:686 -- 707, 2019.

\bibitem{RUDD2015}
K.~Rudd and S.~Ferrari.
\newblock A {C}onstrained {I}ntegration ({CINT}) {A}pproach to {S}olving
  {P}artial {D}ifferential {E}quations {U}sing {A}rtificial {N}eural
  {N}etworks.
\newblock {\em Neurocomputing}, 155:277 -- 285, 2015.

\bibitem{doi:10.1063/1.4961454}
K.~Shao, J.~Chen, Z.~Zhao, and D.~H. Zhang.
\newblock Communication: {F}itting {P}otential {E}nergy {S}urfaces with
  {F}undamental {I}nvariant {N}eural {N}etwork.
\newblock {\em J. Chem. Phys.}, 145:071101, 2016.

\bibitem{SHEKARIBEIDOKHTI2009898}
R.~Shekari~Beidokhti and A.~Malek.
\newblock {S}olving {I}nitial-boundary {V}alue {P}roblems for {S}ystems of
  {P}artial {D}ifferential {E}quations {U}sing {N}eural {N}etworks and
  {O}ptimization {T}echniques.
\newblock {\em J. Franklin Inst.}, 346:898 -- 913, 2009.

\bibitem{ShenYangZhang2019}
Z.~Shen, H.~Yang, and S.~Zhang.
\newblock {Deep {N}etwork {A}pproximation {C}haracterized by {N}umber of
  {N}eurons}.
\newblock {\em arXiv e-prints}, page arXiv:1906.05497, 2019.

\bibitem{ShenYangZhang20192}
Z.~Shen, H.~Yang, and S.~Zhang.
\newblock Nonlinear {A}pproximation via {C}ompositions.
\newblock {\em Neural Networks}, 119:74 -- 84, 2019.

\bibitem{SIRIGNANO2018}
J.~Sirignano and K.~Spiliopoulos.
\newblock {DGM}: a {D}eep {L}earning {A}lgorithm for {S}olving {P}artial
  {D}ifferential {E}quations.
\newblock {\em J. Comput. Phys.}, 375:1339 -- 1364, 2018.

\bibitem{StoerBulirsch2002}
J.~Stoer and R.~Bulirsch.
\newblock {\em Introduction to Numerical Analysis(Third Edition)}.
\newblock Springer, New York, 2002.

\bibitem{Mingui2003}
M.~Sun, X.~Yan, and R.~J. Sclabassi.
\newblock Solving {P}artial {D}ifferential {E}quations in {R}eal-time {U}sing
  {A}rtificial {N}eural {N}etwork {S}ignal {P}rocessing as an {A}lternative to
  {F}inite-element {A}nalysis.
\newblock In {\em International Conference on Neural Networks and Signal Processing, 2003.}, volume~1, pages 381--384, 2003.

\bibitem{Tang2017}
W.~Tang, T.~Shan, X.~Dang, M.~Li, F.~Yang, S.~Xu, and J.~Wu.
\newblock Study on a {P}oisson's {E}quation {S}olver {B}ased on {D}eep
  {L}earning {T}echnique.
\newblock In {\em 2017 IEEE Electrical Design of Advanced Packaging and Systems
  Symposium (EDAPS)}, pages 1--3, 2017.

\bibitem{doi:10.1063/1.5054310}
C.~Xie, X.~Zhu, D.~R. Yarkony, and H.~Guo.
\newblock Permutation {I}nvariant {P}olynomial {N}eural {N}etwork {A}pproach to
  {F}itting {P}otential {E}nergy {S}urfaces. {IV}. {C}oupled {D}iabatic
  {P}otential {E}nergy {M}atrices.
\newblock {\em J. Chem. Phys.}, 149:144107, 2018.

\bibitem{Xu1994}
J.~Xu.
\newblock A {N}ovel {T}wo-{G}rid {M}ethod for {S}emilinear {E}lliptic
  {E}quations.
\newblock {\em SIAM J. Sci. Comput.}, 15:231--237, 1994.

\bibitem{Xu1996}
J.~Xu.
\newblock Two-grid {D}iscretization {T}echniques for {L}inear and {N}onlinear
  {PDE}s.
\newblock {\em SIAM J. Numer. Anal.}, 33:1759--1777, 1996.

\bibitem{XuZhou2001}
J~Xu and A~Zhou.
\newblock A {T}wo-grid {D}iscretization {S}cheme for {E}igenvalue {P}roblems.
\newblock {\em Math. Comp.}, 70:17--25, 2001.

\bibitem{yarotsky2017}
D.~Yarotsky.
\newblock Error {B}ounds for {A}pproximations with {D}eep {R}e{LU} {N}etworks.
\newblock {\em Neural Networks}, 94:103--114, 2017.

\bibitem{yarotsky2018}
D.~Yarotsky.
\newblock Optimal {A}pproximation of {C}ontinuous {F}unctions by very {D}eep
  {R}e{LU} {N}etworks.
\newblock In {\em 31st Annual Conference on Learning Theory}, volume~75, pages
  1--11. 2018.

\bibitem{haomin}
Y.~Zang, G.~Bao, X.~Ye, and H.~Zhou.
\newblock Weak {A}dversarial {N}etworks for {H}igh-dimensional {P}artial
  {D}ifferential {E}quations.
\newblock {\em arXiv e-prints}, page arXiv:1907.08272, 2019.

\bibitem{Zhang:2018:ESP:3327345.3327356}
L.~Zhang, J.~Han, H.~Wang, W.~A. Saidi, R.~Car, and W.~E.
\newblock End-to-end {S}ymmetry {P}reserving {I}nter-atomic {P}otential
  {E}nergy {M}odel for {F}inite and {E}xtended {S}ystems.
\newblock In {\em  Advances in Neural Information Processing Systems 31}, pages 4436--4446, 2018. 

\bibitem{Zhou2004}
A.~Zhou.
\newblock An {A}nalysis of {F}inite-dimensional {A}pproximations for the
  {G}round {S}tate {S}olution of {B}ose-{E}instein {C}ondensates.
\newblock {\em Nonlinearity}, 17:541--550, 2004.

\bibitem{ZhuLin1989}
C.~Zhu and Q.~Lin.
\newblock {\em The {H}yperconvergence {T}heory of {F}inite {E}lements}.
\newblock Hunan Science and Technology Publishing House, Changsha, 1989.

\bibitem{ZienkiewiczTaylorZhu2005}
O.~C. Zienkiewicz, R.~L. Taylor, and J.Z. Zhu.
\newblock {\em The Finite Element Method (Sixth Edition)}.
\newblock Butterworth-Heinemann, Oxford, 2005.

\end{thebibliography}

\newpage
\appendix{{\bf\large Appendix}}

\section{Proof of Theorem \ref{thm: semilinear PDE}}

Making use of the assumption A2 in Section \ref{sec:sl}, we immediately have the following result (see \cite{Xu1994}).
\begin{lemma}\label{lem: infsup}
	Let $u \in W^{2,\infty}(\Omega)$ be a solution of \eqref{eq: semilinear}. There exist
	two positive constant $C_1^u$ and $C_2^u$ such that if a function $v$ satisfies that $\| v - u \|_{0,\infty} \leq C_1^u$, then
	\[
		C_2^u \| \phi \|_1 \leq \sup_{\chi\in V} \frac{a_v(\phi,\chi)}{\| \chi \|_1},
		\quad \phi \in V.
	\]
\end{lemma}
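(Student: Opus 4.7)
The plan is to view this as a perturbation result: assumption A2 already gives the inf-sup stability for the bilinear form $a_u$ with constant $C^u$, so I would prove that $a_v$ remains inf-sup stable provided $v$ lives in a small enough $L^\infty$-neighborhood of $u$, by estimating the discrepancy $a_v - a_u$ against $\|\phi\|_1 \|\chi\|_1$.

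First, for any $\phi,\chi\in V$ I would write
\[
 a_v(\phi,\chi) \;=\; a_u(\phi,\chi) \;+\; \bigl((f'(v)-f'(u))\phi,\chi\bigr),
\]
so that
\[
 \sup_{\chi\in V}\frac{a_v(\phi,\chi)}{\|\chi\|_1} \;\ge\; \sup_{\chi\in V}\frac{a_u(\phi,\chi)}{\|\chi\|_1} \;-\; \sup_{\chi\in V}\frac{|((f'(v)-f'(u))\phi,\chi)|}{\|\chi\|_1}.
\]
The first term is bounded below by $C^u\|\phi\|_1$ thanks to A2. For the second term, I would bound the inner product via Cauchy--Schwarz to get
\[
 |((f'(v)-f'(u))\phi,\chi)| \;\le\; \|f'(v)-f'(u)\|_{0,\infty}\;\|\phi\|_0\;\|\chi\|_0 \;\le\; \|f'(v)-f'(u)\|_{0,\infty}\;\|\phi\|_1\;\|\chi\|_1.
\]

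Next I would use that $u\in W^{2,\infty}(\Omega)\hookrightarrow L^\infty(\Omega)$, so the ball $\{v:\|v-u\|_{0,\infty}\le 1\}$ is a uniformly bounded subset of $L^\infty(\Omega)$; call its $L^\infty$ bound $M$. Since $f(\bm x,\cdot)$ is sufficiently smooth uniformly in $\bm x\in\bar\Omega$, $f'$ is Lipschitz on $\bar\Omega\times[-M,M]$ with some constant $L_u>0$, giving the pointwise estimate $|f'(v(\bm x))-f'(u(\bm x))|\le L_u|v(\bm x)-u(\bm x)|$ and hence $\|f'(v)-f'(u)\|_{0,\infty}\le L_u\|v-u\|_{0,\infty}$.

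Combining, I would obtain
\[
 \sup_{\chi\in V}\frac{a_v(\phi,\chi)}{\|\chi\|_1} \;\ge\; \bigl(C^u - L_u\|v-u\|_{0,\infty}\bigr)\|\phi\|_1.
\]
Then I would set $C_1^u:=\min\{1,\,C^u/(2L_u)\}$ and $C_2^u:=C^u/2$, so that whenever $\|v-u\|_{0,\infty}\le C_1^u$ the desired inequality $C_2^u\|\phi\|_1\le \sup_{\chi}a_v(\phi,\chi)/\|\chi\|_1$ holds. The only mildly delicate point is justifying the local Lipschitz bound on $f'$ uniformly in $\bm x$; this follows immediately from the standing smoothness assumption on $f(\bm x,u)$ together with the fact that $u$, and hence the admissible $v$'s, take values in a fixed compact interval, so I do not expect a real obstacle here.
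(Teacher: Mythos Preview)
Your argument is correct: the decomposition $a_v=a_u+((f'(v)-f'(u))\phi,\chi)$, the sup inequality, the H\"older bound, and the local Lipschitz estimate on $f'$ all go through exactly as you wrote, and the choice $C_1^u=\min\{1,C^u/(2L_u)\}$, $C_2^u=C^u/2$ closes the proof. The paper itself does not give a proof of this lemma at all---it simply states that the result follows immediately from assumption~A2 and cites \cite{Xu1994}---so what you have supplied is precisely the standard perturbation argument that the paper is invoking by reference.
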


The mathematical analysis for the finite element method \eqref{fem-semilinear} is very technical and has been established in \cite{Xu1996}. In the following two lemmas, we collect some results which will be used frequently later on.
\begin{lemma}\label{lem: discrete-infsup}
	Let $u \in W^{2,\infty}(\Omega)$ be a solution of \eqref{v-form}. Then there exists a constant $h_0>0$ and a positive constant $C_3^u$ independent of $h$ such that if $h<h_0$ and a function $v$ satisfies $\| v - u \|_{0,\infty} \leq C_1^u$, then
	\[
		C_3^u \| \phi \|_1 \leq \sup_{\chi\in V_h} \frac{a_v(\phi,\chi)}{\| \chi \|_1},
		\quad \phi \in V_h.
	\]
\end{lemma}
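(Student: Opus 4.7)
The plan is to transfer the continuous inf--sup condition of Lemma \ref{lem: infsup} to the discrete subspace $V_h$ by a Schatz--type duality argument, exploiting the fact that the troublesome term $(f'(v)\phi,\chi)$ is a compact lower-order perturbation of the coercive Laplacian form $(\nabla\phi,\nabla\chi)$.

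Fix $\phi\in V_h\subset V$. First I would use the continuous inf--sup bound in Lemma \ref{lem: infsup} to select $\chi\in V$ with $\|\chi\|_1=1$ and
\[
a_v(\phi,\chi) \geq \tfrac{1}{2}C_2^u\,\|\phi\|_1.
\]
The difficulty is that $\chi$ need not lie in $V_h$. To replace it, I would introduce $\chi_h\in V_h$ as the Ritz projection of $\chi$ with respect to the Dirichlet form,
\[
(\nabla \chi_h,\nabla\eta)=(\nabla\chi,\nabla\eta),\quad \eta\in V_h.
\]
Standard finite element theory gives the stability bound $\|\chi_h\|_1\leq \|\chi\|_1=1$. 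Since $\Omega$ is a convex polygon (or an interval when $d=1$), the dual Poisson problem is $H^2$-regular, and the Aubin--Nitsche trick yields
\[
\|\chi-\chi_h\|_0\;\lesssim\; h\,\|\chi-\chi_h\|_1\;\lesssim\; h\,\|\chi\|_1\;\lesssim\; h.
\]

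With $\chi_h$ in hand, the key algebraic identity is obtained by writing $a_v(\phi,\chi_h)=(\nabla\phi,\nabla\chi_h)+(f'(v)\phi,\chi_h)$ and invoking the defining property of $\chi_h$ (tested with $\phi\in V_h$) to replace $(\nabla\phi,\nabla\chi_h)$ by $(\nabla\phi,\nabla\chi)$. This gives
\[
a_v(\phi,\chi_h)-a_v(\phi,\chi)=(f'(v)\phi,\chi_h-\chi).
\]
Because the hypothesis $\|v-u\|_{0,\infty}\le C_1^u$ and $u\in W^{2,\infty}$ keep $v$ uniformly bounded in $L^\infty$, smoothness of $f$ provides an $L^\infty$ bound on $f'(v)$ depending only on $u$. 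Combined with the Poincar\'e inequality $\|\phi\|_0\lesssim \|\phi\|_1$, the previous $L^2$ estimate yields
\[
|a_v(\phi,\chi_h)-a_v(\phi,\chi)|\;\lesssim\; h\,\|\phi\|_1,
\]
so $a_v(\phi,\chi_h)\geq (\tfrac{1}{2}C_2^u-C h)\|\phi\|_1$.

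Choosing $h_0$ small enough that $Ch_0<\tfrac{1}{4}C_2^u$, one obtains $a_v(\phi,\chi_h)\geq \tfrac{1}{4}C_2^u\,\|\phi\|_1$ for every $h<h_0$, and since $\|\chi_h\|_1\leq 1$,
\[
\sup_{\eta\in V_h}\frac{a_v(\phi,\eta)}{\|\eta\|_1}\;\geq\;\frac{a_v(\phi,\chi_h)}{\|\chi_h\|_1}\;\geq\;\tfrac{1}{4}C_2^u\,\|\phi\|_1,
\]
giving the claim with $C_3^u:=\tfrac{1}{4}C_2^u$. The main technical obstacle is the $L^2$-control of $\chi-\chi_h$ when $\chi$ carries no a priori extra smoothness; this is exactly what the duality/Aubin--Nitsche argument provides once $H^2$-regularity of the Laplacian dual problem is secured by the convexity of $\Omega$.
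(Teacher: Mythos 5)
Your argument is correct and self-contained; the paper itself gives no proof of this lemma, merely pointing to the arguments of Lemma~2.2 in Xu (1996), and your compact-perturbation-plus-duality strategy --- replace the continuous near-maximizer $\chi$ by its Ritz projection $\chi_h$, kill the principal part by Galerkin orthogonality so that only $(f'(v)\phi,\chi_h-\chi)$ survives, bound it by $O(h)\|\phi\|_1$ via Aubin--Nitsche (which needs only $H^2$-regularity of the dual Poisson problem on the convex domain, not smoothness of $\chi$), and absorb the loss for $h<h_0$ --- is precisely the route the cited reference takes. One minor imprecision: the Ritz projection gives $|\chi_h|_1\le|\chi|_1$, hence $\|\chi_h\|_1\le C_\Omega\|\chi\|_1$ with a Poincar\'e constant $C_\Omega$ rather than $\|\chi_h\|_1\le 1$; this only rescales the final constant $C_3^u$ and does not affect the validity of the proof.
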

This lemma can be derived by using Lemma \ref{lem: infsup} and the arguments for proving Lemma 2.2 in \cite{Xu1996}.

\begin{lemma}
\label{convergence}
	Let $u \in W^{2,\infty}(\Omega)$ be a solution of \eqref{v-form}. Then there exists a positive constant $h_1<h_0$ such that if $h<h_1$, the finite element method \eqref{fem-semilinear} has exactly one solution $u^h$ satisfying
	\[
		\| u - u^h \|_1 \leq C_4^u
	\]
for a positive constant $C_4^u$ independent of $h$. Moreover,
	\[
		\lim_{h \to 0^+}\| u -u^h \|_1 = 0.
	\]
\end{lemma}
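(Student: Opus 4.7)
The plan is to obtain $u^h$ via a Banach fixed-point argument, using the discrete inf-sup condition of Lemma \ref{lem: discrete-infsup} as the linear workhorse, with the elliptic projection of $u$ as a reference point.

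First, I would introduce the elliptic projection $\tilde{u}_h \in V_h$ of $u$ with respect to the linearized bilinear form $a_u$: define $\tilde{u}_h$ by $a_u(u - \tilde{u}_h, \chi) = 0$ for all $\chi \in V_h$. Since Lemma \ref{lem: discrete-infsup} gives a discrete inf-sup for $a_u$ when $h<h_0$, this projection is uniquely defined, and a standard C\'ea-type argument combined with the $V_h$-approximation properties and assumption A1 yields $\|u-\tilde{u}_h\|_1 \lesssim h\|u\|_{2,\infty}$. A standard duality / weighted Nitsche argument, using quasi-uniformity of $\mathcal{T}_h$ and the convexity of $\Omega$, upgrades this to $\|u-\tilde{u}_h\|_{0,\infty}\to 0$ as $h\to 0^+$, so that eventually $\|\tilde{u}_h-u\|_{0,\infty}<C_1^u/2$ and Lemma \ref{lem: discrete-infsup} applies with $v=\tilde{u}_h$.

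Next, I would set up a contraction on $B_\rho := \{w\in V_h:\|w-\tilde{u}_h\|_1\le \rho\}$ by defining $T:B_\rho\to V_h$ so that $Tw\in V_h$ is the unique solution of
\[
a_u(Tw,\chi) = a_u(w,\chi)-a(w,\chi) = (f'(u)w-f(w),\chi), \qquad \chi\in V_h.
\]
A fixed point of $T$ satisfies $a(w,\chi)=0$ on $V_h$, and therefore is the desired $u^h$. To verify the hypotheses of Banach's theorem, I would estimate $\|Tw_1-Tw_2\|_1$ by testing with $\chi\in V_h$, applying the discrete inf-sup, and rewriting
\[
f(w_1)-f(w_2)-f'(u)(w_1-w_2) = (f'(\xi)-f'(u))(w_1-w_2)
\]
via Taylor expansion; smoothness of $f$ combined with $\|\xi-u\|_{0,\infty}\le \|\tilde{u}_h-u\|_{0,\infty}+\rho+(\text{inverse-inequality correction})$ makes the Lipschitz constant of $T$ strictly less than one for $\rho$ and $h$ small. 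A similar estimate applied to $T\tilde{u}_h-\tilde{u}_h$ (whose residual reduces, via the definition of $\tilde{u}_h$, to the quadratic Taylor remainder of $f$) shows $T(B_\rho)\subset B_\rho$. Banach's fixed-point theorem then furnishes a unique $u^h\in B_\rho$, and $\|u-u^h\|_1\le \|u-\tilde{u}_h\|_1+\rho\le C_4^u$; letting $\rho\to 0$ as $h\to 0^+$ gives $\|u-u^h\|_1\to 0$.

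The main obstacle will be the nonlinear bookkeeping in two dimensions, where $H^1\not\hookrightarrow L^\infty$ forces one to propagate $L^\infty$-smallness of $\tilde{u}_h-u$ and of $w-\tilde{u}_h$ through the contraction estimate. This requires combining the $L^\infty$-convergence of the elliptic projection with an inverse inequality on $V_h$ applied to $w-\tilde{u}_h$, and a careful choice of $\rho=\rho(h)\to 0$ slow enough that the contraction constant stays below one uniformly in $h$, yet fast enough that uniqueness holds inside the $L^\infty$-ball of Lemma \ref{lem: infsup}. A secondary technicality is reconciling the constants $C_1^u$, $C_2^u$, $C_3^u$ of the previous two lemmas with $C_4^u$ here, which amounts to choosing $h_1<h_0$ small enough that all smallness requirements above are met simultaneously.
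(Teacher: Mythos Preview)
The paper does not actually prove this lemma; it is quoted without proof from \cite{Xu1994,Xu1996} (the sentence preceding Lemmas~\ref{lem: discrete-infsup} and~\ref{convergence} announces that these are results ``collected'' from Xu's work). So there is no in-paper argument to compare against, and your proposal is in fact supplying a proof where the authors simply cite one.

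That said, your Banach fixed-point strategy is the standard route and is essentially what Xu's papers do. A few remarks on the execution. First, in the sentence bounding $\|\xi-u\|_{0,\infty}$ you write ``$+\rho+(\text{inverse-inequality correction})$'', but $\rho$ is an $H^1$-radius, not an $L^\infty$ one; in $d=2$ the correct contribution is $Ch^{-1}\rho$ (or $C|\log h|^{1/2}\rho$ via the discrete Sobolev inequality), and only this makes the subsequent smallness argument honest. You clearly understand this from your final paragraph, but the earlier display should be written consistently. Second, Banach's theorem gives uniqueness only inside $B_\rho$, whereas the lemma asserts uniqueness in the larger ball $\{\|u-\cdot\|_1\le C_4^u\}$; to close this gap you should note that \emph{any} discrete solution $w^h$ with $\|u-w^h\|_1\le C_4^u$ (hence $\|u-w^h\|_{0,\infty}\le C_1^u$ by the same $L^\infty$ control) is a fixed point of $T$, and then re-run the contraction estimate between $w^h$ and the $u^h$ you constructed to conclude $w^h=u^h$. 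Third, the residual computation for $T\tilde{u}_h-\tilde{u}_h$ does reduce cleanly to a quadratic Taylor remainder in $\tilde{u}_h-u$ once you use $a_u(\tilde{u}_h,\chi)=a_u(u,\chi)$ and the weak form $a(u,\chi)=0$; it is worth writing this line explicitly since it is where the smallness of the self-map residual comes from.
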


\begin{lemma}\label{lem: error estimates}
	Let $u \in W^{2,\infty}(\Omega)$ be a solution of \eqref{v-form} and $u^h$ be the finite element method of \eqref{fem-semilinear}. Then, there exists a positive constant { $h_2<h_1$ such that if $h<h_2$},
	\begin{align*}
		& \| u - u^h \|_{0,\infty} \lesssim h^2 \quad \quad \quad \quad \,\, when \ d=1,
		\\
		& \| u - u^h \|_{0,\infty} \lesssim h^2|\log(h)| \quad when \ d=2.
	\end{align*}
\end{lemma}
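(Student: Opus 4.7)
The plan is to prove the $L^\infty$ error estimate by reducing the nonlinear problem to classical pointwise estimates for linear FEM via linearization around the exact solution $u$, and then controlling the quadratic Taylor remainder by a bootstrap argument combining the $H^1$-convergence guaranteed by Lemma~\ref{convergence} with inverse inequalities and Aubin--Nitsche duality.

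First, I would introduce an auxiliary ``linearized Ritz projection'' $w^h \in V_h$ of $u$, defined through the linearized bilinear form $a_u(\phi,\chi) = (\nabla \phi, \nabla \chi) + (f'(u)\phi, \chi)$ by requiring $a_u(w^h - u, \chi) = 0$ for every $\chi \in V_h$. Because $f'(u) \in L^\infty(\Omega)$ and $a_u$ satisfies both the continuous and discrete inf-sup conditions (Lemmas~\ref{lem: infsup} and \ref{lem: discrete-infsup} applied with $v = u$), the classical pointwise finite-element theory for linear second-order elliptic problems with piecewise linear elements (weighted Nitsche / Schatz--Wahlbin type estimates) directly applies to $u - w^h$ and yields, for $h$ small enough,
\[
\|u - w^h\|_{0,\infty} \lesssim h^2\|u\|_{2,\infty} \quad (d=1), \qquad \|u - w^h\|_{0,\infty} \lesssim h^2|\log h|\,\|u\|_{2,\infty} \quad (d=2),
\]
using the assumed regularity $u \in W^{2,\infty}(\Omega)$.

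Second, I would estimate $\|w^h - u^h\|_{0,\infty}$ and show that it is of strictly higher order than the linear bound above. Subtracting the equations for $w^h$ and $u^h$ and using that $u$ solves the continuous problem \eqref{v-form}, one obtains for every $\chi \in V_h$,
\[
a_u(w^h - u^h, \chi) = \bigl(f(u^h) - f(u) - f'(u)(u^h - u),\,\chi\bigr),
\]
so that by the smoothness of $f$ the right-hand side satisfies the pointwise bound $|f(u^h) - f(u) - f'(u)(u^h - u)| \lesssim |u - u^h|^2$, provided $\|u - u^h\|_{0,\infty} \le C_1^u$ (which can be arranged for small $h$ by combining the $H^1$-convergence from Lemma~\ref{convergence} with an inverse inequality on $u^h - I_h u$ in 2D, and directly via Sobolev embedding in 1D). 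Applying the discrete inf-sup condition of Lemma~\ref{lem: discrete-infsup} gives an $H^1$-estimate of $w^h - u^h$ in terms of $\|u - u^h\|_0\|u - u^h\|_{0,\infty}$, and a standard Aubin--Nitsche duality on the linear bilinear form $a_u$ upgrades this to an $L^2$-estimate of the same quadratic-in-error type. Combining with the inverse inequality $\|w^h - u^h\|_{0,\infty} \lesssim h^{-d/2}\|w^h - u^h\|_0$ and the $H^1$-decay $\|u - u^h\|_1 \to 0$ from Lemma~\ref{convergence} then shows $\|w^h - u^h\|_{0,\infty}$ is $o(h^2)$ (respectively $o(h^2|\log h|)$), which is absorbed into the Schatz--Wahlbin bound on $\|u - w^h\|_{0,\infty}$.

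The triangle inequality $\|u - u^h\|_{0,\infty} \le \|u - w^h\|_{0,\infty} + \|w^h - u^h\|_{0,\infty}$ then produces the claimed estimates; the threshold $h_2 < h_1$ is chosen small enough that (i) Lemma~\ref{convergence} provides a unique $u^h$, (ii) the discrete inf-sup condition for $a_u$ on $V_h$ is available, and (iii) the bootstrap yielding $\|u - u^h\|_{0,\infty} \le C_1^u$ closes. The main obstacle I expect is precisely step (iii) in dimension $d = 2$: since $H^1 \not\hookrightarrow L^\infty$ in 2D, one cannot deduce $L^\infty$-smallness of $u - u^h$ from the energy-norm smallness in Lemma~\ref{convergence} by embedding alone, so one must carefully interleave an $L^2$-error bound (obtained via duality) with the inverse inequality applied to $u^h - I_h u$, and quantify the resulting $o(1)$ factors so that the quadratic Taylor remainder yields a truly higher-order contribution rather than swallowing the $h^2|\log h|$ gain from the linear theory.
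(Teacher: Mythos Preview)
Your approach is correct in outline but takes a genuinely different route from the paper. The paper only proves the case $d=1$ directly (it cites \cite{Xu1996} for $d=2$), and the 1D argument differs from yours in two ways. First, it uses the \emph{standard} elliptic projection $P_h$ associated with the pure Laplacian rather than your linearized projection $w^h$ for $a_u$; the pointwise bound $\|u-P_hu\|_{0,\infty}\lesssim h^2$ then comes from elementary 1D interpolation/superconvergence rather than Schatz--Wahlbin theory. Second, the remainder is handled by linearizing around $P_hu$ (not around $u$), which produces the self-referential inequality
\[
\|u^h-P_hu\|_1 \;\le\; C\|u^h-P_hu\|_1^2 + C\|u-P_hu\|_{0,\infty},
\]
and this closes by simple absorption once $\|u^h-P_hu\|_1\to 0$ (from Lemma~\ref{convergence}) and the 1D embedding $H^1\hookrightarrow L^\infty$ are invoked. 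No Aubin--Nitsche step, no inverse inequality, and no intermediate $H^1$-rate are needed.

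What each approach buys: the paper's argument is shorter and entirely elementary in 1D but does not extend to $d=2$ without importing the full result from \cite{Xu1996}. Your argument is heavier (it requires the pointwise FEM theory for indefinite linear problems and the duality/inverse-inequality bootstrap), but it treats both dimensions uniformly. One small caveat: your claim that $\|w^h-u^h\|_{0,\infty}$ is ``strictly higher order'' relies on having the quantitative rate $\|u-u^h\|_1\lesssim h$, which does not follow from Lemma~\ref{convergence} alone (that lemma gives only $o(1)$); you would need to insert a short preliminary step deriving the $H^1$-rate from the discrete inf-sup and the convergence, exactly the kind of bootstrap you allude to in your final paragraph.
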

\begin{proof}
The estimate for $d=2$ is given in \cite{Xu1996}. We will derive the estimate for $d=1$ following some ideas in \cite{Xu1996}. To this end, we first introduce an elliptic projection operator $P_h$ such that if $u\in V$, then $P_h u \in V_h$ satisfies
\begin{equation}
\label{e-projection}
	(\nabla (P_h u ), \nabla \chi) = (\nabla u, \nabla \chi), \quad \chi \in V_h.
\end{equation}
It is shown in \cite{ZhuLin1989} that
\begin{equation}
\label{infty-norm}
	\| u - P_h u \|_{0,\infty} \lesssim \| u - P_h u \|_1 \lesssim h^2 |u|_{2,\infty}.
\end{equation}

On the other hand, the finite element solution $u^h$ satisfies
\[
	(\nabla u^h, \nabla \chi) + (f(u^h), \chi) = 0, \quad \chi\in V_h.
\]
Recalling the relation \eqref{e-projection} and using \eqref{v-form} gives
\[
	(\nabla (P_h u ), \nabla \chi) = (\nabla u, \nabla \chi) = - (f(u), \chi), \quad \chi\in V_h.
\]
Hence, subtracting the last two equations and using Taylor's expansion yield
\[
	a_{P_h u} ( (u^h - P_h u), \chi) = -\frac{1}{2}(f^{\prime\prime}(\zeta)(u^h - P_h u)^2, \chi) - (f(P_h u) - f(u), \chi), \quad \chi \in V_h,
\]
where $\zeta = \nu(x) u^h(x) + (1-\nu(x)) (P_h u)(x)$ for some $\nu(x) \in (0,1)$. We have by the estimate \eqref{infty-norm} that there exists a positive constant { $\bar{h}<h_1$} such that if $h<\bar{h}$, $\|P_hu-u\|_{0,\infty}\le C_1^u$. In this case, it follows from Lemma \ref{lem: discrete-infsup} and the last equation that
\begin{align}
	\| u^h - P_h u \|_1
	& \lesssim \sup_{\chi \in V_h} \frac{a_{P_h u} ( (u^h - P_h u), \chi)}{\| \chi\|_1} \nonumber
	\\
	& \lesssim \| u^h - P_h u \|^2_{0,\infty} + \| f(u) - f(P_h u) \|_{0,\infty} \nonumber
	\\
	& \lesssim \| u^h - P_h u \|^2_1 + \| u - P_h u \|_{0,\infty} \nonumber
	\\
	& \le C \| u^h - P_h u \|_1^2 + C \| u - P_h u \|_{0,\infty}, \label{estimate1}
\end{align}
where $C>0$ is a generic constant. On the other hand, it follows from Lemma \ref{convergence} and the estimate \eqref{infty-norm} that
\[
\|u^h - P_h u\|_1\le \|u-u^h\|_1+\|u-P_hu\|_1\to 0\quad \mbox { as } h\to 0^+.
\]
Hence, there exists a positive constant { $h_2<\bar{h}$ such that if $h<h_2$}, then $C\|u^h - P_h u\|_1<1/2$, which combined with \eqref{estimate1} readily implies
\[
	\| u^h - P_h u \|_1  \lesssim \| u - P_h u \|_{0,\infty}.
\]
Therefore, by the Sobolev embedding theorem and the estimate \eqref{infty-norm},
\begin{align*}
	\| u - u^h \|_{0,\infty}
	& \le \| u -P_h u \|_{0,\infty} + \| P_h u - u^h \|_{0,\infty}
	\\
	& \lesssim \| u -P_h u \|_{0,\infty} + \| P_h u - u^h \|_1\lesssim \| u -P_h u \|_{0,\infty}\lesssim h^2 | u |_{2,\infty},
	\end{align*}
as required.
\end{proof}	
\begin{remark}
As shown in \cite{BrennerScott2008}, we require to make certain strong regularity assumption on the solution to problem \eqref{v-form} for $d=3$, so as to derive the maximum norm estimate for the related finite element method. To avoid too technical treatment, we skip the further discussion in this case.
\end{remark}

The following result plays an important role in the convergence analysis of Int-Deep for solving the finite element method \eqref{fem-semilinear}, which will be introduced later on.

\begin{lemma}\label{lem: recursion}
Let $u \in W^{2,\infty}(\Omega)$ be a solution of \eqref{v-form}. Let $u^h$ be an approximation of $u$ obtained by the finite element method \eqref{fem-semilinear}. Assume that { $h<h_2$ with $h_2$} the same as given in Lemma \ref{lem: error estimates}. For any $v\in V_h \cap B(u)$, define a mapping $Tv = v+w$, where $w\in V_h$ is uniquely determined by
\begin{equation}
\label{linearized-problem}
	(\nabla w, \nabla \chi) + (f'(v)w, \chi) = -(\nabla v, \nabla \chi) - (f(v), \chi), \quad \chi \in V_h.
\end{equation}
Then there holds
\begin{align*}
	\|u^h - Tv \|_1 \lesssim \| u^h - v \|_{0,p}^2 \lesssim \| u^h - v \|_1^2.
\end{align*}
\end{lemma}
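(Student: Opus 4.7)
The plan is to derive a defect equation for $u^h - Tv$ by combining the finite element identity $(\nabla u^h,\nabla \chi)+(f(u^h),\chi)=0$ with the defining relation \eqref{linearized-problem} of $w$, and then to invoke the discrete inf-sup condition from Lemma~\ref{lem: discrete-infsup} to convert a residual bound into an $H^1$ bound. Since $v\in B(u)$, we have $\|v-u\|_{0,\infty}\le C_1^u$, which is precisely the hypothesis needed to apply Lemma~\ref{lem: discrete-infsup} to the frozen-coefficient bilinear form $a_v(\cdot,\cdot)$.

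First I would write $Tv-v=w$ and subtract the two identities above to obtain, for every $\chi\in V_h$,
\[
(\nabla(u^h-Tv),\nabla\chi)+\bigl(f(u^h)-f(v)-f'(v)w,\chi\bigr)=0.
\]
A second-order Taylor expansion of $f$ at $v$ gives $f(u^h)-f(v)=f'(v)(u^h-v)+\tfrac12 f''(\eta)(u^h-v)^2$ for some intermediate state $\eta$. Substituting and using $u^h-v=(u^h-Tv)+w$ collapses the linear terms into $f'(v)(u^h-Tv)$, so we arrive at the key identity
\[
a_v(u^h-Tv,\chi)=-\tfrac12\bigl(f''(\eta)(u^h-v)^2,\chi\bigr),\qquad \chi\in V_h.
\]

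Next, since $h<h_2<h_0$ and $v\in B(u)$, Lemma~\ref{lem: discrete-infsup} yields
\[
\|u^h-Tv\|_1\lesssim \sup_{\chi\in V_h}\frac{a_v(u^h-Tv,\chi)}{\|\chi\|_1}\lesssim \sup_{\chi\in V_h}\frac{\bigl|(f''(\eta)(u^h-v)^2,\chi)\bigr|}{\|\chi\|_1}.
\]
Because $v,u^h$ lie in a bounded set of $L^\infty$ (via $v\in B(u)$ together with Lemma~\ref{convergence}), $f''(\eta)$ is uniformly bounded. For any fixed $p\in[2,\infty)$ I would then apply Hölder's inequality with the splitting $\tfrac{1}{p/2}+\tfrac{p-2}{p}=1$ to obtain
\[
\bigl|(f''(\eta)(u^h-v)^2,\chi)\bigr|\lesssim \|u^h-v\|_{0,p}^{2}\,\|\chi\|_{0,p/(p-2)}.
\]
In $d\in\{1,2\}$ the Sobolev embedding $H^1(\Omega)\hookrightarrow L^{p/(p-2)}(\Omega)$ holds for every finite $p\ge 2$, so $\|\chi\|_{0,p/(p-2)}\lesssim\|\chi\|_1$. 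Combining these bounds yields $\|u^h-Tv\|_1\lesssim\|u^h-v\|_{0,p}^2$, which is the first asserted inequality. The second inequality $\|u^h-v\|_{0,p}^2\lesssim\|u^h-v\|_1^2$ follows from one further application of the Sobolev embedding $H^1\hookrightarrow L^p$.

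The main obstacle, although minor, is bookkeeping: one must verify that $\eta$ stays in a region where $f''$ is uniformly bounded (which needs $v,u^h\in B(u)$ plus boundedness of $u^h$ near $u$ from Lemma~\ref{convergence}), and one must choose the Hölder exponent $p$ so that the corresponding conjugate $p/(p-2)$ remains admissible for the Sobolev embedding in the relevant dimension; this is the reason the statement leaves $p$ unspecified and only requires that such a $p$ exists. In $d=3$, where $H^1\hookrightarrow L^6$ only, one would have to restrict to $p\le 6$, but the excerpt confines attention to $d=1,2$, so every sufficiently large $p$ works and no sharp tuning is required.
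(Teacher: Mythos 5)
Your proposal is correct and follows essentially the same route as the paper's proof: subtract the finite element identity from \eqref{linearized-problem}, Taylor-expand $f$ about $v$ to reduce the defect equation to $a_v(u^h-Tv,\chi)=-\tfrac12(f''(\zeta)(u^h-v)^2,\chi)$, then apply Lemma \ref{lem: discrete-infsup} together with H\"older and the Sobolev embedding with exponent $p>2$. The only cosmetic caveat is that the endpoint $p=2$ must be excluded (as the paper does), since the conjugate exponent $p/(p-2)$ degenerates there; this does not affect the argument.
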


\begin{proof}
Recalling the definition \eqref{fem-semilinear}, we know
\[
	(\nabla u^h, \nabla \chi) + (f(u^h), \chi) = 0, \quad \chi \in V_h,
\]
Subtracting the above equation from \eqref{linearized-problem} and reorganizing terms, we find
\[
	(\nabla E_h, \nabla \chi) + (f(u^h)-f(v)-f'(v)w, \chi) = 0, \quad \chi\in V_h,
\]
where $E_h = u^h - Tv$. Furthermore, use Taylor's expansion to get
\[
	a_v(E_h, \chi)=-\frac{1}{2}(f^{\prime\prime}(\zeta)(u^h-v)^2, \chi), \quad \chi\in V_h,
\]
where $\zeta = (1-\nu(x))v(x) + \nu(x) u^h(x)$ with $\nu(x) \in (0,1)$. Since { $h<h_2$} and $v\in B(u)$, from Lemma \ref{convergence} it follows that { $\|\zeta\|_{0,\infty}$} is uniformly bounded with respect to $h$. Hence, by the H$\ddot{\text{o}}$lder inequality and the Sobolev embedding theorem, for $p>2$ and any $\chi\in V_h$,
\begin{align*}
	a_v(E_h, \chi)&  \lesssim \int_{\Omega} (u^h - v)^2 |\chi| \dx
	\\
	& \lesssim \| (u^h - v)^2 \|_{0,p/2} \| \chi \|_{0,p/p-2} \lesssim \| u^h - v \|_{0,p}^2 \| \chi \|_1,
\end{align*}
where the generic constant is independent of $h$ but depends on $p$. The combination of the last estimate with Lemma~\ref{lem: discrete-infsup} immediately implies
\[
	\| E_h \|_1 \lesssim \| u^h - v \|_{0,p}^2 \lesssim \| u^h - v \|_1^2,
\]
as required.
\end{proof}

Now we are ready to prove Theorem \ref{thm: semilinear PDE}.
\begin{proof}
Denote $E_k^h = u^h - u_k^h$. Applying Lemma~\ref{lem: recursion} gives rise to
\begin{equation}\label{eq: semilinear recursion}
	\| E_{k+1}^h \|_1 \lesssim \| E_k^h \|_{0,p}^2 \lesssim \| E_k^h \|_1^2.
\end{equation}
	
When $d=1$, By the Sobolev embedding theorem and \eqref{eq: semilinear recursion},
\[
	\| E_{k+1}^h \|_{0,\infty}
	\lesssim \| E_{k+1}^h \|_1
	\lesssim \| E_k^h \|_{0,p}^2
	\lesssim \| E_k^h \|_{0,\infty}^2.
\]
This means there exists a positive constant $c_1$ such that $\| E_{k}^h \|_{0,\infty} \leq c_1 \| E_{k-1}^h \|_{0,\infty}^2$. Hence,
\[
	c_1\| E_{k}^h \|_{0,\infty}
	\leq ( c_1 \| E_{k-1}^h \|_{0,\infty})^2
	\leq ( c_1 \| E_0^h \|_{0,\infty})^{2^{k}}.
\]
On the other hand, it follows from Lemma~\ref{lem: error estimates} and error estimates for the interpolation operator $I_h$ (cf. \cite{BrennerScott2008,Ciarlet1978}) that
\begin{align*}
	\| E_0^h \|_{0,\infty}
	& = \| u^h - u_0^h \|_{0,\infty}
	\\
	& \leq \| u^h - u \|_{0,\infty} + \| u - I_hu \|_{0,\infty} + \| I_hu - I_h u^{DL} \|_{0,\infty}
	\\
	& \leq c_0(h^2 + \delta),
\end{align*}
where $c_0>0$ is a generic constant. Let $\beta_1 = c_1c_0(h^2 + \delta)$. Then
\begin{equation}
\label{error-1}
\| E_{k}^h \|_{0,\infty} \leq \beta_1^{2^k}/c_1.
\end{equation}

When $d=2$, for any given number $p>2$, we have by the Sobolev embedding theorem and \eqref{eq: semilinear recursion} that
\[
	\| E_{k+1}^h \|_{0,p} \lesssim \| E_{k+1}^h \|_1^2 \lesssim \| E_k^h \|_{0,p}^2,
\]
which implies $c_2\| E_{k}^h \|_{0,p} \leq (c_2 \| E_{0}^h \|_{0,p})^{2^k}$ for a generic positive constant $c_2$. In addition, by Lemma~\ref{lem: error estimates} and error estimates for $I_h$,
\begin{align*}
	\| E_0^h \|_{0,p}
	& = \| u^h - u_0^h \|_{0,p}
	\\
	& \leq \| u^h - u \|_{0,p} + \| u - I_hu \|_{0,p} + \| I_hu - u_0^h \|_{0,p}
	\\
	& \lesssim \| u^h - u \|_{0,\infty} + \| u - I_hu \|_{0,\infty} + \| I_hu - u_0^h \|_{0,\infty}
	\\
	& \leq c_3(h^2|\log h| + \delta),
\end{align*}
where $c_3>0$ is a generic constant independent of $h$ and $\delta$ but depending on  $p$. Let $\beta_2 = c_2c_3(h^2|\log h| + \delta)$. Then
\[
\| E_{k}^h \|_{0,p} \leq \beta_2^{2^k}/c_2,
\]
and further by the inverse inequality for finite elements,
\begin{equation}
\label{error-2}
	\| E_k^h \|_{0,\infty}
	\lesssim h^{-2/p}\| E_k^h \|_{0,p}
	\lesssim h^{-2/p}\beta_2^{2^k}.
\end{equation}

Now, we have by \eqref{error-1}, \eqref{error-2} and Lemma \ref{lem: error estimates} that
\[
	\| u - u_k^h \|_{0,\infty}
	\le \| u - u^h \|_{0,\infty} + \| u^h - u_k^h \|_{0,\infty}
	\lesssim h^2 + \beta_1^{2^k} \quad \mbox{for } d=1,
\]
and
\[
	\| u - u_k^h \|_{0,\infty}
	\le \| u - u^h \|_{0,\infty} + \| u^h - u_k^h \|_{0,\infty}
	\lesssim h^2 |\log h| + h^{-2/p}\beta_2^{2^k} \quad \mbox{for } d=2.
\]
The proof is complete.

\end{proof}

\section{Proof of Theorem \ref{thm: eigenvalue problem}}

The proof of Theorem \ref{thm: eigenvalue problem} is rather involved and requires the following elementary but nontrivial result as a key bridge to produce the optimal convergence analysis.
{ 
\begin{lemma}\label{B sequence}
Let $\{ a_k \}$ be a sequence satisfying $a_{k+1} \leq a_k^2 +b$ for $k=0,1,2,\cdots$. If $0\le a_0 < 1/2$ and $0 < b < 1/4$, then
\begin{equation}
\label{upper-bound}
  a_k \leq a_0^{2^k} + \left(2+ \frac{1}{1-2a_0}\right)b.
\end{equation}
\end{lemma}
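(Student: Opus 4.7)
My approach is to replace the inequality with a deterministic upper-envelope sequence and then compare that sequence with $a_0^{2^k}$. Define $\alpha_0 := a_0$ and $\alpha_{k+1} := \alpha_k^2 + b$; a one-line induction gives $a_k \le \alpha_k$, so it suffices to establish the stated bound for $\alpha_k$. Two elementary facts about the map $f(x) = x^2 + b$ would be recorded at the outset: since $0 < b < 1/4$, its two fixed points $x_{\pm} = \tfrac{1\pm \sqrt{1-4b}}{2}$ are real and positive with $x_+ > 1/2 > a_0$, and the estimate $\sqrt{1-4b} \ge 1-4b$ yields $x_- \le 2b$.

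The first substantive step is to establish the invariant $\alpha_k \le \max(a_0, x_-)$ (in particular $\alpha_k < 1/2$). This is a short monotonicity induction on the increasing map $f$: if $a_0 \le x_-$ then $f$ sends $[0,x_-]$ into itself, while if $a_0 > x_-$ then $a_0 \in [x_-, x_+]$ forces $a_0^2 - a_0 + b \le 0$, so $f$ sends $[x_-, a_0]$ into itself.

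The main calculation compares $\alpha_k$ with $\beta_k := a_0^{2^k}$, which satisfies $\beta_{k+1} = \beta_k^2$. The gap $\delta_k := \alpha_k - \beta_k \ge 0$ (immediate from induction) obeys the affine recursion
\[
    \delta_{k+1} \;=\; \alpha_k^2 - \beta_k^2 + b \;=\; (\alpha_k + \beta_k)\,\delta_k + b, \qquad \delta_0 = 0.
\]
I would then split into two regimes. If $a_0 \ge x_-$, the invariant gives $\alpha_k, \beta_k \le a_0$, hence $\alpha_k + \beta_k \le 2a_0 < 1$; summing the resulting geometric inequality $\delta_{k+1} \le 2a_0\,\delta_k + b$ yields $\delta_k \le b/(1-2a_0)$. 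If $a_0 < x_-$, the invariant already bounds $\alpha_k \le x_- \le 2b$, hence $\delta_k \le \alpha_k \le 2b$. In both cases $\delta_k \le \bigl(2 + \tfrac{1}{1-2a_0}\bigr)b$, which gives the claimed inequality after adding $\beta_k = a_0^{2^k}$.

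The main obstacle I anticipate is that a direct induction on the target inequality $a_k \le a_0^{2^k} + Cb$ with $C = 2 + \tfrac{1}{1-2a_0}$ forces the algebraic requirement $C(1 - 2a_0 - Cb) \ge 1$, which breaks down when $a_0$ is close to $1/2$ and $b$ is close to $1/4$. Routing through the deterministic sequence $\alpha_k$ and isolating the sub-fixed-point regime $a_0 < x_-$ (where $\alpha_k$ is controlled directly by $x_- \le 2b$ rather than by the affine recursion) is exactly what lets the argument go through under the weak hypothesis $b < 1/4$ alone.
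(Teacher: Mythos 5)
Your proposal is correct and follows essentially the same route as the paper: both compare $a_k$ to the deterministic envelope $\alpha_{k+1}=\alpha_k^2+b$, split on whether $a_0$ lies below the smaller fixed point of $x\mapsto x^2+b$ (where the envelope is absorbed by $x_-\le 2b$) or at/above it (where the envelope stays $\le a_0$ and the gap to $a_0^{2^k}$ obeys the affine recursion $\delta_{k+1}\le 2a_0\delta_k+b$). The only cosmetic difference is how boundedness by $a_0$ is established in the second case --- you use invariance of the interval $[x_-,a_0]$ under the increasing map, while the paper shows the envelope is decreasing via convexity of $x+b/x$ --- and both arguments implicitly use $a_k\ge 0$ in the comparison step $a_k^2\le\alpha_k^2$, which holds in the paper's application.
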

\begin{proof}
First of all, construct an auxiliary sequence $\{ A_k \}$ generated by
\begin{equation}\label{Ak def}
  A_{k+1} = A_k^2 +b, \quad k=0,1,2,\cdots;\quad A_0=a_0.
\end{equation}
It is easy to check $a_k \leq A_k$ for all nonnegative integer $k$. The recursive relation \eqref{Ak def} is a fixed point iteration corresponding to the following fixed point equation
\begin{equation}
\label{quadratic}
 A=A^2+b\Rightarrow A^2 - A + b =0,
\end{equation}
which has exactly two fixed points
\begin{align*}
  \alpha_0 = \frac{2b}{1+\sqrt{1-4b}}
  \quad \mbox{and} \quad
  \alpha_1 = \frac{1+\sqrt{1-4b}}{2}.
\end{align*}

Next, let us study the boundedness of the sequence $\{A_k\}$. When $A_0 \leq \alpha_0$, we have by mathematical induction that $A_k \leq \alpha_0$ for $k=0,1,2,\cdots$. In fact, if $k=0$  the statement is true. Now we assume it holds for $k=m$ with $m$ an any given natural number, i.e. $A_m \leq \alpha_0$.  Then, observing that $\alpha_0$ satisfies the equation \eqref{quadratic}, we immediately know
\[
  A_{m+1} = A_m^2 + b \leq \alpha_0^2 +b = \alpha_0.
\]
So the statement is really true by the principle of mathematical induction. On the other hand, it is easy to check by a direct computation that $\alpha_0\leq 2b$. Hence, if $A_0 \leq \alpha_0$, there holds
\begin{equation}\label{case1}
a_k \leq A_k \leq \alpha_0 \leq 2b.
\end{equation}

When $\alpha_0 \leq A_0 \leq \alpha_1$, we use mathematical induction again to know $\alpha_0 \leq A_k \leq \alpha_1$ for any nonnegative integer $k$. Let us now consider a function $f(x)$ defined by
\[
  f(x)= x + \frac{b}{x}, \quad x \in [\alpha_0, \alpha_1].
\]
Clearly, it is convex over $[\alpha_0, \alpha_1]$ and hence must take the maximum value over the interval at one of the two end points. However, recalling the definitions of $\alpha_0$ and $\alpha_1$, we easily know
\[
f(\alpha_0)=f(\alpha_1)=1,
\]
which implies $f(x)\leq 1$ for $x\in [\alpha_0, \alpha_1]$. Thus, since $A_k\in  [\alpha_0, \alpha_1]$, we find
\[
  \frac{A_{k+1}}{A_k} = \frac{A_k^2+b}{A_k} =f(A_k)\leq 1;
\]
in other words, $\{A_k\}$ is a decreasing sequence. Therefore, the result $A_{k} \leq a_0$ also holds in this case. To sum up, we find that if $a_0<1/2<\alpha_1$, $A_{k} \leq a_0$ for all nonnegative integer $k$.

To further our analysis, we require to construct another auxiliary sequence generated by
\begin{equation}
\label{Bk def}
B_{k+1} = B_k^2,\quad k=0,1,2,\cdots;\quad B_0 = a_0.
\end{equation}
It is evident that $B_k=B_0^{2^k}=a_0^{2^k}$ and $B_k \leq A_k$. Denote $d_k=A_k - B_k\ge 0$. Recalling the definitions \eqref{Ak def} and \eqref{Bk def}, and using the fact that $A_k\leq a_0$ obtained above, we find
\[
  d_{k+1} = A_k^2 + b - B_k^2 = d_k(A_k+B_k)+b \leq 2a_0d_k+b,
\]
which readily yields
\[
  d_k \leq (2a_0)^{k}d_0 + \frac{1-(2a_0)^k}{1-2a_0}b\le \frac 1 {1-2a_0}b
\]
by noting that $d_0 = 0$. Therefore,
\begin{equation*}
  A_k = B_k + d_k \leq B_0^{2^k} + \frac{1-(2a_0)^k}{1-2a_0}b = a_0^{2^k} + \frac{1-(2a_0)^k}{1-2a_0}b
\end{equation*}
and
\begin{equation}\label{case2}
  a_k \leq A_k \leq a_0^{2^k} + \frac{1}{1-2a_0}b.
\end{equation}
We then obtain the required estimate by combining \eqref{case1} and \eqref{case2}.
\end{proof}
}

Now we are ready to present the proof of Theorem \ref{thm: eigenvalue problem}.

\begin{proof}
For any $\phi \in V_h$, we know
\begin{align*}
 a(P_h u - u_{k+1}^h, \chi)
 &= a(u,\chi) - a(u_{k+1}^h ,\chi)
 = \lambda(u, \chi) - \lambda_k^h(u_k^h, \chi)
 \\
 &= (\lambda - \lambda_k^h)(u,\chi) + \lambda_k^h(u - u_k^h, \chi).
\end{align*}
Choosing $\chi = P_h u - u_{k+1}^h$, we have by the coerciveness of $a(\cdot, \cdot)$ that
\begin{align*}
 \alpha_0 \| P_h u - u_{k+1}^h \|_1^2
 & \leq a(P_h u - u_{k+1}^h, P_h u - u_{k+1}^h)
 \\
 &\leq |\lambda - \lambda_k^h|(u,P_h u - u_{k+1}^h) + |\lambda_k^h|(u - u_k^h, P_h u - u_{k+1}^h)
 \\
 & \lesssim \left( |\lambda - \lambda_k^h| \| u \|_{-1} + |\lambda_k^h - \lambda + \lambda| \| u - u_{k}^h \|_{-1} \right) \|P_h u - u_{k+1}^h\|_1,
\end{align*}
which combined with the assumption { $\|u_k^h\|_0\le \varepsilon_1$} implies
\begin{align*}
 \| P_h u - u_{k+1}^h \|_1
 & \lesssim |\lambda - \lambda_k^h| \| u \|_{0} + \left( | \lambda - \lambda_k^h | + | \lambda| \right) \| u - u_{k}^h \|_{0}
 \\
 & \lesssim |\lambda - \lambda_k^h| + | \lambda - \lambda_k^h | \left( \| u\|_0+\|u_k^h\|_0\right) + \| u - u_{k}^h \|_0
 \\
 & \lesssim |\lambda - \lambda_k^h| + \| u - u_{k}^h \|_0.
\end{align*}
Hence, by the error estimate for $P_h$ and the triangle inequality,
\begin{equation}\label{pf: eigenvector recursion}
  \| u - u_{k+1}^h \|_1 \leq \| u -P_h u \|_1 + \| P_h u - u_{k+1}^h \|_1 \lesssim h + |\lambda - \lambda_k^h| + \| u - u_{k}^h \|_0.
\end{equation}
Using the duality argument to the equation determining $u_{k+1}^h$, we deduce from \eqref{pf: eigenvector recursion} and the Cauchy inequality that
\begin{align}
\label{pf: eigenvector}
  \| u - u_{k+1}^h \|_0
 & \lesssim h \| u - u_{k+1}^h \|_1\lesssim h^2 + h
 \left( |\lambda - \lambda_k^h| + \| u - u_{k}^h \|_0 \right)  \nonumber
 \\
 & \lesssim h^2+\left( |\lambda - \lambda_k^h| + \| u - u_{k}^h \|_0 \right)^2.
\end{align}
By Lemma~\ref{lem: eigenvlue} and assumption $\| u_{k+1}^h \|_0\ge \varepsilon_0 > 0$,
\[
 |\lambda-\lambda_{k+1}^h| = \left | |u_{k+1}^h - u|_1^2 - \lambda \| u_{k+1}^h - u \|_0^2 \right|/\|u_{k+1}^h\|_0^2
  \lesssim \| u_{k+1}^h - u \|_1^2.
\]
Inserting this into \eqref{pf: eigenvector recursion} gives
\[
 | \lambda - \lambda_{k+1}^h |
 \lesssim \| u - u_{k+1}^h \|_1^2
 \lesssim \left( h + |\lambda - \lambda_k^h| + \| u - u_{k}^h \|_0 \right)^2
 \lesssim h^2 + \left(|\lambda - \lambda_k^h| + \| u - u_{k}^h \|_0 \right)^2,
\]
which combined with \eqref{pf: eigenvector} implies
\begin{equation}\label{pf: eigenvalue eigenvector recursion}
 | \lambda - \lambda_{k+1}^h | + \| u - u_{k+1}^h \|_0 \lesssim h^2 + \left(|\lambda - \lambda_k^h| + \| u - u_{k}^h \|_0 \right)^2.
\end{equation}
Let $e_k = |\lambda - \lambda_k^h| + \| u - u_{k}^h \|_0$. Then the estimate \eqref{pf: eigenvalue eigenvector recursion} can be expressed as
\[
 e_{k+1} \leq \bar{c}_4h^2 + c_4e_k^2,
\]
where $c_4$ and $\bar{c}_4$  are two positive generic constants. The above inequality can expressed as
\begin{equation}
\label{eigen-recursive}
 c_4 e_{k+1} \leq c_5 h^2 + (c_4e_k)^2,
\end{equation}
where  $c_5=c_4\bar{c}_4$.

On the other hand, by assumption and using error estimates of $I_h$, we know there exists a constant $\tilde{h}_1>0$ such that if $h<\tilde{h}_1$, there holds
\[
 e_0
 = |\lambda - \lambda_0^h| + \| u - u_{0}^h \|_0
 \leq |\lambda - \lambda_0^h| + \| u - I_hu\|_0  + \| I_h u - u_{0}^h \|_0
 \le c_6(\tilde{\delta}+ h^2),
\]
where $c_6>0$ is a generic constant. For clarity, we want to point out all the generic constants given above are independent of the finite element mesh size $h$.

{  Next, we choose two positive constants $\tilde{h}_0$ and $\tilde{\delta}_0$ such that
\begin{equation}
\label{constraints}
\tilde{h}_0<\tilde{h}_1,\quad c_4c_6(\tilde{\delta_0}+\tilde{h_0}^2)<1/2\quad \mbox{and} \quad  c_5\tilde{h_0}^2 < 1/4.
\end{equation}

For the recursive estimate \eqref{eigen-recursive}, we set $a_k=c_4e_k$ and $b=c_5 h^2$. Then, if $h<\tilde{h}_0$ and $\tilde{\delta}<\tilde{\delta}_0$, the conditions \eqref{constraints} hold, or equivalently the assumptions in Lemma \ref{B sequence} hold, so it follows from \eqref{upper-bound} that
\[
 c_4e_k \leq (c_4e_0)^{2^k} + c_5\left(2+ \frac{1}{1-2c_4e_0}\right)h^2 = (c_4e_0)^{2^k} + c_5\left(3+ \frac{2c_4e_0}{1-2c_4e_0}\right)h^2,
\]
which immediately yields
\[
 |\lambda - \lambda_k^h| + \| u - u_{k}^h \|_0 \lesssim \beta_3^{2^k} + h^2.
\]
The proof is complete.}
\end{proof}

\end{document}